\newtheorem{theorem}{Theorem}[section]
\newtheorem{lemma}[theorem]{Lemma}
\newtheorem{corollary}[theorem]{Corollary}
\newtheorem{proposition}[theorem]{Proposition}
\newtheorem{definition}[theorem]{Definition}
\newtheorem{algorithm}{Algorithm}
\newcommand{\inner}[2]{\langle #1,#2\rangle}
\newcommand{\norm}[1]{\|{#1}\|}
\newcommand{\R}{\mathbb{R}}
\newcommand{\comenta}[1]{}
\newcommand{\HH}{\mathcal{H}}
\newcommand{\mgap}{\vspace{.1in}}
\begin{document}

\title{Variants of the A-HPE and large-step A-HPE algorithms for strongly convex problems with applications 
to accelerated high-order tensor methods}
\author{
M. Marques Alves
\thanks{
Departamento de Matem\'atica,
Universidade Federal de Santa Catarina,
Florian\'opolis, Brazil, 88040-900 
({\tt maicon.alves@ufsc.br}).
The work of this author was partially 
supported by CNPq grants no. 304692/2017-4.
}
}

\maketitle

\begin{abstract}
For solving strongly convex optimization problems, we propose and study the global convergence of variants of the 
accelerated hybrid proximal extragradient (A-HPE) and large-step A-HPE algorithms of Monteiro and Svaiter~\cite{mon.sva-acc.siam13}.
We prove \emph{linear} and the \emph{superlinear} $\mathcal{O}\left(k^{\,-k\left(\frac{p-1}{p+1}\right)}\right)$ global rates for the proposed
variants of
the A-HPE and large-step A-HPE methods, respectively. 
The parameter $p\geq 2$ appears in the (high-order) large-step condition of the new large-step A-HPE algorithm.
 We apply our results to high-order tensor methods, obtaining a new inexact (relative-error)
 tensor method for (smooth) strongly convex optimization with iteration-complexity $\mathcal{O}\left(k^{\,-k\left(\frac{p-1}{p+1}\right)}\right)$.
 In particular, for $p=2$, we obtain an inexact proximal-Newton algorithm with fast global
 $\mathcal{O}\left(k^{\,-k/3}\right)$ convergence rate.
 \\
  \\
  2000 Mathematics Subject Classification: 90C60, 90C25, 47H05, 65K10.
  \\
  \\
  Key words: Convex optimization, strongly convex, accelerated methods, proximal-point algorithm, large-step, 
  high-order tensor methods, superlinear convergence, proximal-Newton method.
 \end{abstract}

\pagestyle{plain}


\section{Introduction}
 \label{sec:int}
The \emph{proximal-point method}~\cite{martinet70,roc-mon.sjco76} is one of the most popular algorithms for solving nonsmooth convex optimization problems. 
For the general problem of minimizing a convex function $h(\cdot)$, its \emph{exact} version can be described by the iteration
\begin{align}
 \label{eq:pp_intr}
 x^{k+1}=\mbox{arg}\min_{x}\,\left\{h(x)+\dfrac{1}{2\lambda}\norm{x-x^k}^2\right\},\qquad k\geq 0,
\end{align}
where $\lambda=\lambda_{k+1}>0$ and $x^k$ is the current iterate.
Motivated by the fact that in many cases the computation of $x^{k+1}$ is numerically expensive, several authors have proposed \emph{inexact} versions of \eqref{eq:pp_intr}. 
Among them, inexact proximal-point methods based on \emph{relative-error} criterion for the subproblems are currently quite popular. 
For the more abstract setting of solving inclusions for maximal monotone 
operators, this approach was initially developed by Solodov and Svaiter (see, e.g., \cite{sol.sva-hpe.svva99,sol.sva-hyb.jca99, sol.sva-breg.mor00,sol.sva-uni.nfao01}), subsequently studied, from the viewpoint of computational complexity, by 
Monteiro and Svaiter 
(see, e.g., \cite{mon.sva-tse.siam10,mon.sva-hpe.siam10, mon.sva-newton.siam12,mon.sva-acc.siam13}) and has gained a lot of attention by different authors and research groups 
(see, e.g., \cite{att.alv.sva-dyn.jca16,bach-preprint21,eck.yao-rel.mp18,zhang-preprint20,jordan-controlpreprint20}) with many applications 
in optimization algorithms and related topics such as variational inequalities, saddle-point problems, etc.

The starting point of this contribution is \cite{mon.sva-acc.siam13}, where the 
relative-error inexact hybrid proximal extragradient (HPE) method~\cite{mon.sva-hpe.siam10,sol.sva-hpe.svva99} 
was accelerated for convex optimization, by using Nesterov's acceleration~\cite{nes-book}.
The resulting accelerated HPE-type algorithms, called A-HPE and large-step A-HPE, were applied to first- and second-order optimization, with iteration-complexities $\mathcal{O}\left(1/k^2\right)$ and 
$\mathcal{O}\left(1/k^{7/2}\right)$, respectively. The A-HPE and/or the large-step A-HPE algorithms were recently studied also in 
\cite{arjevani-oracle.mp19,bach-preprint21,bubeck-near.pmlr19,gasnikov-optimal.pmlr19,zhang-preprint20,jordan-controlpreprint20}, 
with applications in high-order optimization, machine learning and tensor methods.

In this paper, we consider the (unconstrained) convex optimization problem
\begin{align}
 \label{eq:co}
  \min_{x}\,\left\{h(x):=f(x)+g(x)\right\},
\end{align}
where $f$ is convex and $g$ is \emph{strongly convex}. 
For solving \eqref{eq:co}, we propose and study the convergence rates of variants of the A-HPE and large-step A-HPE algorithms. The new algorithms are designed especially for strongly convex problems, and the resulting global convergence rates are \emph{linear} and 
$\mathcal{O}\left(k^{\,-k\left(\frac{p-1}{p+1}\right)}\right)$ for the variants of the A-HPE and large-step A-HPE, respectively.
(the parameter $p\geq 2$ appears in the high-order large-step condition (see \cite{zhang-preprint20,jordan-controlpreprint20}.)
We also apply our study to tensor algorithms for high-order convex optimization, a topic which has been the object of investigation of several authors 
(see, e.g., \cite{bubeck-near.pmlr19,doikov-local.preprint19,gra.nes-ten.oms20,zhang-preprint20,jordan-controlpreprint20,nes-preprint20b,nes-preprint20a} and references therein).
The proposed inexact (relative-error) $p$-th order tensor algorithm has global \emph{superlinear} $\mathcal{O}\left(k^{\,-k\left(\frac{p-1}{p+1}\right)}\right)$ convergence rate. We also mention that, for $p=2$ we obtain, as a by-product of our approach to high-order optimization, a fast $\mathcal{O}\left(k^{-k/3}\right)$ proximal-Newton method for strongly convex optimization.

The main contributions of this paper can be summarized as follows:

\begin{itemize}
\item[(i)] A variant of the A-HPE algorithm for strongly convex objectives (Algorithm \ref{alg:main}) and its iteration-complexity analysis as in Theorems \ref{th:main_comp} and \ref{th:main_comp03}.
\item[(ii)] A large-step A-HPE-type algorithm for strongly convex problems (Algorithm \ref{alg:ls}) with a high-order large-step condition and its iteration-complexity (see Theorem \ref{th:main_comp02}).
\item[(iii)] A new inexact high-order tensor algorithm (Algorithm \ref{alg:second}) for strongly convex problems and its global convergence analysis (see Theorem \ref{th:main_tensor}). Here and in item (ii) above we highlight the fast global convergence rate $\mathcal{O}\left(k^{\,-k\left(\frac{p-1}{p+1}\right)}\right)$.
\item[(iv)] An inexact relative-error forward-backward algorithm for strongly convex optimization 
(see Algorithm \ref{alg:first01} and Theorem \ref{th:first01}). 
\end{itemize}

Additionally to the contributions described in (i)--(iv) above, we refer the reader to the remarks/comments following Algorithms \ref{alg:main}, \ref{alg:ls}, \ref{alg:second} and \ref{alg:first01}. 

\mgap
\noindent
{\bf Some previous contributions.} 
The A-HPE and forward-backward methods for strongly convex problems were also recently studied in \cite{bach-preprint21}.
Based on the A-HPE framework, $p$th-order tensor methods with iteration-complexity $\mathcal{O}\left(1/k^{\frac{3p+1}{2}}\right)$ were studied in  
\cite{arjevani-oracle.mp19,bubeck-near.pmlr19,gasnikov-optimal.pmlr19,zhang-preprint20,jordan-controlpreprint20}.
When combined with restart techniques, improved rates for the uniformly- and/or strongly convex case were also obtained in 
\cite{arjevani-oracle.mp19,gasnikov-optimal.pmlr19} (see also \cite{kornowski-high.preprint20}). 
We also mention that local superlinear convergence rates for tensor methods were obtained in \cite{doikov-local.preprint19}.
Tensor and/or second-order schemes were also recently studied in 
\cite{doi.nes-min.jota21,dvurechensky-near.preprint19,gra.nes-acc.siam19,gra.nes-ten.oms20}.

\mgap

\noindent
{\bf General notation.} 
We denote by $\HH$ a finite-dimensional real vector space with inner product $\inner{\cdot}{\cdot}$ and induced norm $\norm{\cdot}=\sqrt{\inner{\cdot}{\cdot}}$.
The \emph{$\varepsilon$-subdifferential} and the \emph{subdifferential} of a convex function 
$g:\HH\to (-\infty,\infty]$ at $x\in \HH$ are defined as
$\partial_\varepsilon g(x) := \{u\in \HH\;|\; g(y)\geq g(x)+\inner{u}{y-x}-\varepsilon\quad \forall y\in \HH\}$ and 
$\partial g(x) := \partial_0 g(x)$, respectively. For additional details on standard notations and definitions of convex analysis we refer the reader to the reference~\cite{rock-ca.book}.
Recall that $g:\HH\to (-\infty,\infty]$ is $\mu$-strongly convex if
 $\mu>0$ and, for all $x,y\in \HH$,
\begin{align}
\label{eq:def.str}
g(\lambda x+(1-\lambda)y)\leq \lambda g(x)+(1-\lambda)g(y)-
\dfrac{1}{2} \mu\lambda(1-\lambda)
\norm{x-y}^2,\qquad \forall 
\lambda\in [0,1].
\end{align}

\section{A variant of the A-HPE algorithm for strongly convex problems}
 \label{sec:alg}

In this section, we consider the convex optimization problem \eqref{eq:co}, i.e.,
\begin{align*}
 %
  \min_{x\in \HH}\,\{h(x):=f(x)+g(x)\},
\end{align*}
where $f,g:\HH\to (-\infty, \infty]$ are proper, closed and convex functions, $\mbox{dom}\,h\neq\emptyset$, and 
$g$ is {\it $\mu$-strongly convex}, for some $\mu>0$. We will denote by $x^*$ the unique solution of \eqref{eq:co}. 

\mgap

Next we present the main algorithm of this section for solving \eqref{eq:co}, whose the complexity analysis will be presented in Theorems \ref{th:main_comp} and \ref{th:main_comp03}.

\mgap
\mgap

%
\noindent
\fbox{
\begin{minipage}[h]{6.6 in}
\begin{algorithm}
\label{alg:main}
{\bf A variant of the A-HPE algorithm for solving the (strongly convex) problem \eqref{eq:co}}
\end{algorithm}
\begin{itemize}
\item [0)] 
Choose $x^0,y^0\in \HH$, $\sigma\in [0,1]$, let $A_0=0$ and set $k=0$.
\item [1)] 
Compute $\lambda_{k+1}>0$ and $(y^{k+1},v^{k+1},\varepsilon_{k+1})\in \HH\times \HH\times \R_{++}$ such that 
\begin{align}
\begin{aligned}
 \label{eq:alg_err}
  &v^{k+1}\in \partial_{\varepsilon_{k+1}} f(y^{k+1})+ \partial g(y^{k+1}),\\[3mm]
  &\dfrac{\norm{\lambda_{k+1}v^{k+1}+y^{k+1}-\widetilde x^k}^2}{1+\lambda_{k+1}\,\mu}
  +2\lambda_{k+1}\varepsilon_{k+1}
  \leq \sigma^2\norm{y^{k+1}-\widetilde x^k}^2,
\end{aligned}
\end{align}
where
\begin{align}
\label{eq:alg_xtil}
& \widetilde x^k =  \left(\dfrac{a_{k+1}-\mu A_k\lambda_{k+1}}{A_k+a_{k+1}}\right)x^k
 + \left(\dfrac{A_k+\mu A_k\lambda_{k+1}}{A_k+a_{k+1}}\right)y^k,\\[3mm]
 \label{eq:alg_a}
& a_{k+1}=\dfrac{(1+2\mu A_k)\lambda_{k+1}+\sqrt{(1+2\mu A_k)^2\lambda_{k+1}^2
+4(1+\mu A_k)A_k\lambda_{k+1}}}{2}. 
\end{align}

\item[2)] 
 Let
  \begin{align}
  \label{eq:alg_A}
  &A_{k+1} = A_k + a_{k+1},\\[2mm]
  \label{eq:alg_xne}
  & x^{k+1} = \left(\dfrac{1+\mu A_k}{1+\mu A_{k+1}}\right)x^k + 
  \left(\dfrac{\mu a_{k+1}}{1+\mu A_{k+1}}\right) y^{k+1}
  - \left(\dfrac{a_{k+1}}{1+\mu A_{k+1}}\right)v^{k+1}.
  \end{align}
\item[3)] 
Set $k=k+1$ and go to step 1.
\end{itemize}
\noindent
\end{minipage}
} 
%

\mgap
\mgap

Next we make some remarks about Algorithm \ref{alg:main}:

\begin{itemize}
\item[(i)] By letting $\mu=0$ in Algorithm \ref{alg:main}, we obtain a special instance of the A-HPE algorithm of Monteiro and Svaiter (see \cite[Section 3]{mon.sva-acc.siam13}), whose global convergence rate is 
$\mathcal{O}\left(1/k^2\right)$ (see \cite[Theorem 3.8]{mon.sva-acc.siam13}).
On the other hand, thanks to the strong-convexity assumption on $g$, in Theorems \ref{th:main_comp} and \ref{th:main_comp03} we obtain \emph{linear convergence} for Algorithm \ref{alg:main}. 
We will also study a \emph{high-order} large-step version of Algorithm \ref{alg:main} (see Algorithm \ref{alg:ls} in Section \ref{sec:ls}), for
which \emph{superlinear} $\mathcal{O}\left(k^{\,-k\left(\frac{p-1}{p+1}\right)}\right)$ global convergence rates are proved, where $p\geq 2$. 
Applications of the latter result to high-order tensor methods for convex optimization will also be discussed in Section \ref{sec:ls}.
%
%
\item[(ii)] Since the cost of computing $\widetilde x^k, a_{k+1}, A_{k+1}$ and $x^{k+1}$ as in 
\eqref{eq:alg_xtil}--\eqref{eq:alg_xne} is negligible (from a computational viewpoint), it follows that
the computational burden of Algorithm \ref{alg:main} is represented by the computation of $\lambda_{k+1}>0$ and 
$(y^{k+1}, v^{k+1},\varepsilon_{k+1})$  as in \eqref{eq:alg_err}. 
In this regard, note that if $\mbox{prox}_{\lambda h}:=(\lambda \partial h+I)^{-1}$ of $h$ is computable, for $\lambda>0$, then $\lambda_{k+1}:=\lambda$ and 
$(y^{k+1}, v^{k+1},\varepsilon_{k+1}):=\left(\mbox{prox}_{\lambda h}(\widetilde x^k),\frac{\widetilde x^k-y^{k+1}}{\lambda_{k+1}},0\right)$ clearly satisfy the conditions in \eqref{eq:alg_err} with $\sigma=0$.
On the other hand, in the more general setting of $\sigma>0$, Algorithm \ref{alg:main} can be used both as a framework for the design and analysis of practical algorithms \cite{mon.sva-acc.siam13} and as a \emph{bilevel} method, in which the inequality
in \eqref{eq:alg_err} is used as a stopping criterion for some \emph{inner} algorithm applied to the regularized inclusion 
$0\in \lambda \partial h(x)+x-\widetilde x^k$. In this case, note that the \emph{error-criterion} in \eqref{eq:alg_err} 
is \emph{relative} and controlled by the parameter $\sigma\in (0,1]$.
%
%
\item[(iii)] We emphasize that the inequality in \eqref{eq:alg_err} is specially tailored for strongly convex problems, in the sense that it is more general than the usual inequality appearing in relative-error HPE-type 
methods (see, e.g., \cite{alves-reg.siam16,eck.yao-rel.mp18,mon.sva-hpe.siam10,mon.sva-newton.siam12,sol.sva-hpe.svva99}), which in the context of this paper would read as 
\[
 \norm{\lambda_{k+1} v^{k+1}+y^{k+1}-\widetilde x^k}^2+2\lambda_{k+1}\varepsilon_{k+1}
 \leq\sigma^2\norm{y^{k+1}-\widetilde x^k}^2.
\]
%
%
%
\item[(iv)] 
We also mention that Algorithm \ref{alg:main} is closely related to a variant of the A-HPE for strongly convex objectives presented and studied in \cite[Section 5]{bach-preprint21}. 
In this paper, by taking an approach similar to the one which was considered in~\cite{mon.sva-acc.siam13,nes-smo.mp05}, we obtain global convergence rates for 
Algorithm \ref{alg:main} in terms of \emph{function values}, \emph{sequences} and \emph{(sub-)gradients} 
(see Theorems \ref{th:main_comp} and \ref{th:main_comp03} below). In contrast to \cite{bach-preprint21}, in this paper we also consider a \emph{large-step} version of Algorithm \ref{alg:main}, namely Algorithm \ref{alg:ls}, for which the (global) superlinear $\mathcal{O}\left(k^{-k\,\left(\frac{p-1}{p+1}\right)}\right)$ convergence rate is proved (see Theorems \ref{th:main_comp02} and \ref{th:main_tensor} below).

\item[(v)] We note that condition \eqref{eq:alg_a} yields
\begin{align}
 \label{eq:cond.lamb}
 \dfrac{(1+\mu A_k)A_{k+1}\lambda_{k+1}}{a_{k+1}^2}
+\dfrac{\mu A_k\lambda_{k+1}}{a_{k+1}}=1.
\end{align}
Indeed, substitution of $A_{k+1}$ by $A_k+a_{k+1}$ (see \eqref{eq:alg_A}) and some simple algebra give that
\eqref{eq:cond.lamb} is equivalent to
\begin{align}
 \label{eq:cond.lamb2}
 a_{k+1}^2-(1+2\mu A_k)\lambda_{k+1}a_{k+1}-(1+\mu A_k)A_k\lambda_{k+1}=0.
\end{align} 
Note now that $a_{k+1}$ as in \eqref{eq:alg_a} is exactly the largest root of the quadratic equation in \eqref{eq:cond.lamb2}.

\item[(vi)] Using \eqref{eq:alg_A} and the fact that $A_0=0$ (see step 0) we obtain 
$A_1=A_0+a_1=a_1$. On the other hand, direct substitution of $A_0=0$ in \eqref{eq:alg_a} with $k=0$ yields
$a_1=\lambda_1$. As a consequence, we conclude that
\begin{align}
 \label{eq:A1a1}
 A_1=a_1=\lambda_1.
\end{align}

\end{itemize}

\mgap

In what follows in this section, we will analyze convergence rates of Algorithm \ref{alg:main}. 
To this end, we first define $\gamma_k(\cdot)$ and $\Gamma_k(\cdot)$ as, for all $x\in \HH$,
\begin{align}
\label{eq:def.gammak}
\gamma_k(x)=h(y^k)+\inner{v^k}{x-y^k}-\varepsilon_k+\dfrac{\mu}{2}\norm{x-y^k}^2\qquad (k\geq 1)
\end{align}
and
\begin{align}
 \label{eq:def.Gammak}
\Gamma_0(x)=0\;\;\mbox{and},\; \mbox{for}\;k\geq 1,\;\;\Gamma_k(x)=\sum_{j=1}^k\,\dfrac{a_j}{A_k}\gamma_j(x).
\end{align}
Note that 
\begin{align}
 \label{eq:der.gamma}
 \nabla \gamma_k(x)=v^k+\mu(x-y^k)\;\;\mbox{and}\;\; \nabla^2\gamma_k(x)=\mu I
\end{align}
and observe that $A_k$ ($k=0,1,\dots$) as in Algorithm \ref{alg:main} satisfies 
\begin{align}
 \label{eq:def.ak}
 A_0=0\;\;\mbox{and},\;\mbox{for}\;k\geq 1,\;\;
 A_{k}=\sum_{j=1}^k\,a_j.
\end{align}
From \eqref{eq:def.Gammak}--\eqref{eq:def.ak} we obtain, for $k\geq 1$,
\begin{align}
 \label{eq:der.Gamma}
  \nabla^2\Gamma_k(x)=\mu I,\qquad x\in \HH.
\end{align}
Note also that the following holds trivially from \eqref{eq:def.Gammak} and \eqref{eq:def.ak}: for all $k\geq 0$,
\begin{align}
 \label{eq:rec.ak}
&A_{k+1}\Gamma_{k+1} = A_k\Gamma_k+a_{k+1}\gamma_{k+1}.
\end{align}
Define also, for all $k\geq 0$,
\begin{align}
\label{eq:def.betak}
&\beta_k=\inf_{x\in \HH}\left\{A_k\Gamma_k(x)+\dfrac{1}{2}\norm{x-x^0}^2\right\}.
\end{align}
Note that $\beta_0=0$.

\mgap

The following three technical lemmas will be useful to prove the first result on the iteration-complexity of Algorithm \ref{alg:main}, namely Proposition \ref{pr:cambio} below.

\mgap

\begin{lemma}
 \label{lm:initial}
Let $\gamma_k(\cdot)$ and $\Gamma_k(\cdot)$ be as in \eqref{eq:def.gammak} and \eqref{eq:def.Gammak}, respectively.
The following holds:
\begin{itemize}
\item[\emph{(a)}] For all $k\geq 1$, we have $\gamma_k(x)\leq h(x),\quad \forall x\in \HH$.
\item[\emph{(b)}] For all $k\geq 0$, we have $x^k=\arg\min_{x\in \HH}\{A_k\Gamma_k(x)+\frac{1}{2}\norm{x-x^0}^2\}$.
\end{itemize}
\end{lemma}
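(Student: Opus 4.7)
The plan is to prove the two parts separately, with part (a) being a direct consequence of the defining inequalities for $\partial_{\varepsilon_k} f$ and $\partial g$, and part (b) being proved by induction on $k$, exploiting that $A_k\Gamma_k(\cdot) + \tfrac{1}{2}\|\cdot-x^0\|^2$ is a simple quadratic thanks to \eqref{eq:der.Gamma}.

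For part (a), I would split the inclusion in \eqref{eq:alg_err} at iteration $k$, writing $v^k = v_f^k + v_g^k$ with $v_f^k \in \partial_{\varepsilon_k} f(y^k)$ and $v_g^k \in \partial g(y^k)$. The $\varepsilon$-subdifferential inequality gives $f(x) \geq f(y^k) + \inner{v_f^k}{x-y^k} - \varepsilon_k$, while $\mu$-strong convexity of $g$ combined with $v_g^k \in \partial g(y^k)$ gives $g(x) \geq g(y^k) + \inner{v_g^k}{x-y^k} + \tfrac{\mu}{2}\|x-y^k\|^2$. Adding the two inequalities and using $h = f+g$ yields $h(x) \geq \gamma_k(x)$ by the very definition \eqref{eq:def.gammak}.

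For part (b), I would argue by induction on $k$. The base case $k=0$ is immediate since $A_0=0$ and $\Gamma_0 \equiv 0$, so the minimizer of $\tfrac{1}{2}\|x-x^0\|^2$ is $x^0$. For the induction step, assume $x^k$ minimizes $\varphi_k(x) := A_k\Gamma_k(x) + \tfrac{1}{2}\|x-x^0\|^2$. Because $\nabla^2 \varphi_k(x) = (1+\mu A_k) I$ by \eqref{eq:der.Gamma}, the function $\varphi_k$ is a quadratic and admits the exact Taylor expansion
\begin{align*}
\varphi_k(x) = \varphi_k(x^k) + \tfrac{1+\mu A_k}{2}\|x-x^k\|^2.
\end{align*}
Using the recursion \eqref{eq:rec.ak}, one gets
\begin{align*}
\varphi_{k+1}(x) = \varphi_k(x^k) + \tfrac{1+\mu A_k}{2}\|x-x^k\|^2 + a_{k+1}\gamma_{k+1}(x).
\end{align*}
Setting $\nabla \varphi_{k+1}(x) = 0$ and using \eqref{eq:der.gamma} to compute $\nabla \gamma_{k+1}(x) = v^{k+1} + \mu(x-y^{k+1})$ yields
\begin{align*}
(1+\mu A_k)(x-x^k) + a_{k+1}v^{k+1} + \mu a_{k+1}(x-y^{k+1}) = 0,
\end{align*}
which after collecting terms (and using $A_{k+1}=A_k + a_{k+1}$) gives exactly the update \eqref{eq:alg_xne}.

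No serious obstacle is expected: part (a) is a one-line argument, and in part (b) the only thing to watch is the bookkeeping of the coefficients, which matches the definition of $x^{k+1}$ precisely because \eqref{eq:alg_xne} is engineered as the unconstrained minimizer of a $(1+\mu A_{k+1})$-strongly convex quadratic.
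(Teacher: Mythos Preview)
Your proposal is correct and follows essentially the same approach as the paper: part (a) splits $v^k$ into its $\partial_{\varepsilon_k} f$ and $\partial g$ components and adds the corresponding subgradient/strong-convexity inequalities, and part (b) uses induction together with the exact quadratic Taylor expansion afforded by \eqref{eq:der.Gamma} to reduce the minimization to that of $\tfrac{1+\mu A_k}{2}\|x-x^k\|^2 + a_{k+1}\gamma_{k+1}(x)$, whose first-order condition yields precisely \eqref{eq:alg_xne}. The only cosmetic difference is that the paper phrases the induction step as ``$x^{k+1}$ is the minimizer'' without writing out the gradient equation, whereas you compute it explicitly.
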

\begin{proof}
(a) In view of the inclusion in \eqref{eq:alg_err} we have, for all $k\geq 1$, 
$v^k=r^k+s^k$, where $r^k\in \partial_{\varepsilon_k} f(y^k)$ and $s^k\in \partial g(y^k)$.
Using the assumption that $g$ is $\mu$-strongly convex and the definition of the $\varepsilon$-subdifferential of
$f$ we obtain, for all $x\in \HH$,
\begin{align*}
 & f(x)\geq f(y^k)+\inner{r^k}{x-y^k}-\varepsilon_k,\\
 & g(x)\geq g(y^k)+\inner{s^k}{x-y^k}+\dfrac{\mu}{2}\norm{x-y^k}^2,
\end{align*}
which in turn combined with the definition of $h(\cdot)$ in \eqref{eq:co}, 
the fact that $v^k=r^k+s^k$ and \eqref{eq:def.gammak} yields the desired result.

\mgap

(b) Let us proceed by induction on $k\geq 0$. The result is trivially true for $k=0$ (since $A_0\Gamma_0=0$).
Assume now that it is true for some $k\geq 0$, i.e., assume that 
$x^k=\arg\min_x\{A_k\Gamma_k(x)+\frac{1}{2}\norm{x-x^0}^2\}$.
Using the latter identity, \eqref{eq:der.Gamma}--\eqref{eq:def.betak} and Taylor's theorem we find
\begin{align}
\nonumber
A_{k+1}\Gamma_{k+1}(x)+\dfrac{1}{2}\norm{x-x^0}^2
      &=A_{k}\Gamma_{k}(x)+\dfrac{1}{2}\norm{x-x^0}^2 + a_{k+1}\gamma_{k+1}(x)\\
      \label{eq:kaua}
      &=\beta_k+\left(\dfrac{1+\mu A_k}{2}\right)\norm{x-x^k}^2+ a_{k+1}\gamma_{k+1}(x).
\end{align}
From the definition of $\gamma_{k+1}(\cdot)$ (see \eqref{eq:def.gammak}) and some simple calculus one can check that
$x^{k+1}$ as in \eqref{eq:alg_xne} is exactly the (unique) minimizer of 
$x\mapsto \left(\frac{1+\mu A_k}{2}\right)\norm{x-x^k}^2+ a_{k+1}\gamma_{k+1}(x)$. Hence, from this fact and
\eqref{eq:kaua} we obtain that 
$x^{k+1}=\arg\min_{x\in \HH}\{A_{k+1}\Gamma_{k+1}(x)+\frac{1}{2}\norm{x-x^0}^2\}$, completing the induction argument.
\end{proof}

\mgap

\begin{lemma}
 \label{lm:gauss}
Consider the sequences evolved by \emph{Algorithm \ref{alg:main}}.
The following holds for all $x\in \HH$:
\begin{itemize}
\item[\emph{(a)}] For all $k\geq 0$,
\begin{align*}
A_k \Gamma_k(x)+\dfrac{1}{2}\norm{x-x^0}^2=\beta_k+ \left(\dfrac{1+\mu A_k}{2}\right)\norm{x-x^k}^2.
\end{align*}
\item[\emph{(b)}] For all $k\geq 0$,
\begin{align*}
A_{k+1} \Gamma_{k+1}(x)+\dfrac{1}{2}\norm{x-x^0}^2=\beta_k+
\left(\dfrac{1+\mu A_k}{2}\right)\norm{x-x^k}^2 + a_{k+1}\gamma_{k+1}(x).
\end{align*}
\item[\emph{(c)}] For all $k\geq 0$,
\begin{align*}
 A_k h(y^k)+
A_{k+1} \Gamma_{k+1}(x)+\dfrac{1}{2}\norm{x-x^0}^2\geq \beta_k+
\left(\dfrac{1+\mu A_k}{2}\right)\norm{x-x^k}^2 + a_{k+1}\gamma_{k+1}(x)
+ A_k \gamma_{k+1}(y^k).
\end{align*}
\end{itemize}
\end{lemma}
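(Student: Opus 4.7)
The three statements are essentially a Taylor-expansion identity and its immediate corollaries, so I would chain them together rather than prove each from scratch.

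For part (a), I would exploit the fact, from \eqref{eq:der.Gamma}, that the map $x\mapsto A_k\Gamma_k(x)+\tfrac12\|x-x^0\|^2$ is a quadratic with constant Hessian $(1+\mu A_k)I$. By Lemma~\ref{lm:initial}(b) its unique minimizer is $x^k$, and by the definition \eqref{eq:def.betak} of $\beta_k$ the minimum value is $\beta_k$. Expanding to second order around $x^k$ therefore gives
\[
A_k\Gamma_k(x)+\tfrac12\|x-x^0\|^2=\beta_k+\tfrac{1+\mu A_k}{2}\|x-x^k\|^2,
\]
which is (a).

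For part (b), I would just add $a_{k+1}\gamma_{k+1}(x)$ to both sides of the identity in (a) and invoke the recursion \eqref{eq:rec.ak}, i.e.\ $A_{k+1}\Gamma_{k+1}(x)=A_k\Gamma_k(x)+a_{k+1}\gamma_{k+1}(x)$, to rewrite the left-hand side as $A_{k+1}\Gamma_{k+1}(x)+\tfrac12\|x-x^0\|^2$.

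For part (c), I would start from (b) and add $A_k h(y^k)$ to both sides. On the right-hand side I would then replace $A_k h(y^k)$ by the smaller quantity $A_k\gamma_{k+1}(y^k)$, the inequality $h(y^k)\ge \gamma_{k+1}(y^k)$ being supplied by Lemma~\ref{lm:initial}(a) and $A_k\ge 0$ coming from \eqref{eq:def.ak}. This gives the claimed inequality.

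None of the three steps looks hard; the only point that requires any care is making sure the constant $\beta_k$ is identified correctly in (a), which is a direct consequence of combining \eqref{eq:def.betak} with Lemma~\ref{lm:initial}(b). The rest is bookkeeping.
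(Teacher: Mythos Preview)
Your proposal is correct and follows essentially the same approach as the paper's proof: Taylor expansion around the minimizer $x^k$ for (a), the recursion \eqref{eq:rec.ak} for (b), and Lemma~\ref{lm:initial}(a) applied at $y^k$ for (c). The only minor difference is that the paper handles the case $k=0$ in (a) separately (since \eqref{eq:der.Gamma} is stated only for $k\geq 1$), but this is trivially covered by $A_0\Gamma_0=0$.
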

\begin{proof}
(a) First note that the result is trivial for $k=0$, since $\beta_0=A_0=0$ and $\Gamma_0=0$.
Now note that in view of \eqref{eq:der.Gamma} we obtain, for $k\geq 1$,
\[
\nabla^2 \left(A_k \Gamma_k(\cdot)+\dfrac{1}{2}\norm{\cdot-x^0}^2\right)(x)= 1+\mu A_k.
\]
Using the latter identity, Lemma \ref{lm:initial}(b), \eqref{eq:def.betak} and Taylor's theorem we find
\begin{align*}
A_k \Gamma_k(x)+\dfrac{1}{2}\norm{x-x^0}^2&=
        \underbrace{A_k \Gamma_k(x^k)+\dfrac{1}{2}\norm{x^k-x^0}^2}_{\beta_k}+
        \dfrac{1}{2}\inner{(1+\mu A_k)(x-x^k)}{x-x^k}\\
        &=\beta_k+\left(\dfrac{1+\mu A_k}{2}\right)\norm{x-x^k}^2.
\end{align*}

(b) From \eqref{eq:rec.ak} and item  (a), we obtain, for all $k\geq 0$,
\begin{align*}
A_{k+1} \Gamma_{k+1}(x)+\dfrac{1}{2}\norm{x-x^0}^2&=A_k\Gamma_k(x)+\dfrac{1}{2}\norm{x-x^0}^2
  +a_{k+1}\gamma_{k+1}(x)\\
  &=\beta_k+\left(\dfrac{1+\mu A_k}{2}\right)\norm{x-x^k}^2+a_{k+1}\gamma_{k+1}(x).
\end{align*}

(c) From (b) and Lemma \ref{lm:initial}(a) with $k=k+1$ and  $x=y^k$,

\begin{align*}
A_k h(y^k)+A_{k+1} \Gamma_{k+1}(x)+\dfrac{1}{2}\norm{x-x^0}^2&=\beta_k+
\left(\dfrac{1+\mu A_k}{2}\right)\norm{x-x^k}^2 + a_{k+1}\gamma_{k+1}(x)+A_k h(y^k)\\
&\geq \beta_k+
\left(\dfrac{1+\mu A_k}{2}\right)\norm{x-x^k}^2 + a_{k+1}\gamma_{k+1}(x)+A_k \gamma_{k+1}(y^k).
\end{align*}
\end{proof}

\mgap

\begin{lemma}
 \label{lm:newton}
 Consider the sequences evolved by \emph{Algorithm \ref{alg:main}}. The following holds:
\begin{itemize}
\item[\emph{(a)}] For all $k\geq 0$ and $x\in \HH$,
\begin{align*}
a_{k+1}\gamma_{k+1}(x)+A_k \gamma_{k+1}(y^k)=
A_{k+1}\gamma_{k+1}(\widetilde x)
+ \left(\dfrac{\mu\, a_{k+1}A_k}{2 A_{k+1}}\right)\norm{x-y^k}^2,
\end{align*}
where
\begin{align}
 \label{eq:def.xtilde}
\widetilde x:= \dfrac{a_{k+1}}{A_{k+1}}x+\dfrac{A_k}{A_{k+1}}y^k.
\end{align}
\item[\emph{(b)}] For all $k\geq 0$ and $x\in \HH$,
\begin{align*}
A_k h(y^k)+
A_{k+1} \Gamma_{k+1}(x)+\dfrac{1}{2}\norm{x-x^0}^2\geq \beta_k+
A_{k+1}\Big[\gamma_{k+1}(\widetilde x)+\Delta_k\Big],
\end{align*}
where, for all $k\geq 0$, $\widetilde x$ is as in \eqref{eq:def.xtilde} and
\begin{align}
\label{eq:def.deltak}
&\Delta_k:=\left(\dfrac{(1+\mu A_k)A_{k+1}}{2a_{k+1}^2}\right)\norm{\widetilde x-z^k}^2 
+\left(\dfrac{\mu A_k}{2a_{k+1}}\right)\norm{\widetilde x-y^k}^2,\\
 \label{eq:def.xktilde}
&z^k:= \dfrac{a_{k+1}}{A_{k+1}}x^k+\dfrac{A_k}{A_{k+1}}y^k.
\end{align}
\item[\emph{(c)}] For all $k\geq 0$,
\begin{align}
\label{eq:kaua02}
\Delta_k&=\dfrac{1}{2\lambda_{k+1}}\left[\norm{\widetilde x-\widetilde x^k}^2
+\left(\dfrac{\mu(1+\mu A_k)\lambda_{k+1}^2A_k}{a_{k+1}A_{k+1}}\right)\norm{x^k-y^k}^2
\right],
\end{align}
where $\widetilde x$ is as in \eqref{eq:def.xtilde}.
\item[\emph{(d)}] For all $k\geq 0$ and $x\in \HH$,
\begin{align*}
A_k h(y^k)+
A_{k+1} \Gamma_{k+1}(x)+\dfrac{1}{2}\norm{x-x^0}^2\geq \beta_k+
A_{k+1}h(y^{k+1})&+\left(\dfrac{1-\sigma^2}{2}\right)
\left(\dfrac{A_{k+1}}{\lambda_{k+1}}\norm{y^{k+1}-\widetilde x^k}^2\right)\\[2mm]
&+\left(\dfrac{\mu(1+\mu A_k)\lambda_{k+1}A_k}{2a_{k+1}}\right)\norm{x^k-y^k}^2.
\end{align*}

\end{itemize}
\end{lemma}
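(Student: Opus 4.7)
The plan is to treat the four parts in order, with each building on the previous. For part (a), I would use that $\gamma_{k+1}$ is a quadratic with Hessian $\mu I$ (by \eqref{eq:der.gamma}); for any such quadratic and convex weights $\alpha$ and $1-\alpha$, one has the exact identity $\alpha \gamma_{k+1}(u) + (1-\alpha)\gamma_{k+1}(v) = \gamma_{k+1}(\alpha u + (1-\alpha) v) + \tfrac{\mu}{2}\alpha(1-\alpha)\norm{u-v}^2$. Applying this with $\alpha = a_{k+1}/A_{k+1}$, $u=x$, $v=y^k$ (so that $\alpha u + (1-\alpha)v = \widetilde x$) and multiplying through by $A_{k+1}$ yields (a) at once.

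For part (b), I would start from Lemma \ref{lm:gauss}(c), substitute the identity from (a) to replace $a_{k+1}\gamma_{k+1}(x) + A_k\gamma_{k+1}(y^k)$ by $A_{k+1}\gamma_{k+1}(\widetilde x) + \tfrac{\mu a_{k+1}A_k}{2A_{k+1}}\norm{x-y^k}^2$, and then recast the two remaining quadratic residuals in terms of $\widetilde x$. Inverting the affine relation $\widetilde x = \tfrac{a_{k+1}}{A_{k+1}}x + \tfrac{A_k}{A_{k+1}}y^k$ gives $x - x^k = \tfrac{A_{k+1}}{a_{k+1}}(\widetilde x - z^k)$ (with $z^k$ as in \eqref{eq:def.xktilde}) and $x - y^k = \tfrac{A_{k+1}}{a_{k+1}}(\widetilde x - y^k)$. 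Substituting these and factoring out $A_{k+1}$ produces exactly $\Delta_k$ in the form \eqref{eq:def.deltak}.

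Part (c) is the most delicate step. Set $\alpha := (1+\mu A_k)A_{k+1}\lambda_{k+1}/a_{k+1}^2$ and $\beta := \mu A_k \lambda_{k+1}/a_{k+1}$, so that by \eqref{eq:cond.lamb} one has $\alpha+\beta=1$ and $2\lambda_{k+1}\Delta_k = \alpha\norm{\widetilde x - z^k}^2 + \beta\norm{\widetilde x - y^k}^2$. Using the standard convex-combination identity for squared norms, this becomes $\norm{\widetilde x - (\alpha z^k + \beta y^k)}^2 + \alpha\beta\norm{z^k - y^k}^2$. The crucial check is $\alpha z^k + \beta y^k = \widetilde x^k$: the coefficient of $x^k$ in this combination is $\alpha\,a_{k+1}/A_{k+1} = (1+\mu A_k)\lambda_{k+1}/a_{k+1}$, which rearranges by \eqref{eq:cond.lamb} to $(a_{k+1} - \mu A_k\lambda_{k+1})/A_{k+1}$, matching the coefficient in \eqref{eq:alg_xtil}; the $y^k$-coefficient then matches automatically since the weights sum to one. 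Finally, $z^k - y^k = (a_{k+1}/A_{k+1})(x^k - y^k)$, so multiplying $\alpha\beta$ by $a_{k+1}^2/A_{k+1}^2$ produces precisely the advertised prefactor of $\norm{x^k-y^k}^2$.

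For part (d), combine (b) and (c) and isolate the $\widetilde x$-dependent portion of $A_{k+1}[\gamma_{k+1}(\widetilde x)+\Delta_k]$, namely $A_{k+1}\bigl[\inner{v^{k+1}}{\widetilde x - y^{k+1}} + \tfrac{\mu}{2}\norm{\widetilde x - y^{k+1}}^2 + \tfrac{1}{2\lambda_{k+1}}\norm{\widetilde x - \widetilde x^k}^2\bigr]$. Setting $u = \widetilde x - y^{k+1}$ and $w = y^{k+1} - \widetilde x^k$ and completing the square in $u$ shows that the minimum over $\widetilde x$ equals $\tfrac{A_{k+1}}{2\lambda_{k+1}}\bigl[\norm{w}^2 - \norm{\lambda_{k+1}v^{k+1} + y^{k+1} - \widetilde x^k}^2/(1+\mu\lambda_{k+1})\bigr]$. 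The error criterion \eqref{eq:alg_err} bounds the subtracted term above by $\sigma^2\norm{w}^2 - 2\lambda_{k+1}\varepsilon_{k+1}$; after absorbing the $-A_{k+1}\varepsilon_{k+1}$ coming from $\gamma_{k+1}$ one recovers the $(1-\sigma^2)A_{k+1}/\lambda_{k+1}$ coefficient of $\norm{y^{k+1}-\widetilde x^k}^2$ in (d), while the $\widetilde x$-independent piece of $\Delta_k$ contributes the final $\norm{x^k-y^k}^2$ term. The main obstacle is the identity $\alpha z^k + \beta y^k = \widetilde x^k$ in (c), which rests on the precise choice of $a_{k+1}$ in \eqref{eq:alg_a}; once that is secured, the rest of (d) is a one-shot completion-of-the-square invoking the HPE-style error inequality.
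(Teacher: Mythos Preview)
Your proposal is correct and follows essentially the same route as the paper: part (a) via the convex-combination identity for a quadratic with Hessian $\mu I$; part (b) from Lemma~\ref{lm:gauss}(c) plus (a) and the change of variables $x-x^k=\tfrac{A_{k+1}}{a_{k+1}}(\widetilde x-z^k)$, $x-y^k=\tfrac{A_{k+1}}{a_{k+1}}(\widetilde x-y^k)$; part (c) by writing $2\lambda_{k+1}\Delta_k$ as a convex combination of two squared norms with weights summing to one (using \eqref{eq:cond.lamb}) and verifying $\alpha z^k+\beta y^k=\widetilde x^k$; part (d) by lower-bounding the quadratic $q_{k+1}(\widetilde x)$ via its minimum and invoking \eqref{eq:alg_err}. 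The only cosmetic difference is that the paper packages the completion-of-the-square step in (d) into an appendix lemma (Lemma~\ref{lm:wolfe}), whereas you sketch it directly; your bookkeeping of the $-\varepsilon_{k+1}$ term is also correct.
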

\begin{proof}
(a) First recall that (see \eqref{eq:def.gammak})
\begin{align}
 \label{eq:recall.gamma}
\gamma_{k+1}(x)=\underbrace{h(y^{k+1})+
\inner{v^{k+1}}{x-y^{k+1}}-\varepsilon_{k+1}}_{\ell_{k+1}(x)}+\dfrac{\mu}{2}\norm{x-y^{k+1}}^2, \qquad \forall x\in \HH.
\end{align}
Let $p=\frac{a_{k+1}}{A_{k+1}}$, $q=\frac{A_k}{A_{k+1}}$ and note that $p,q\geq 0$, $p+q=1$
and $\widetilde x=px+qy^k$. Since 
$\ell_{k+1}(\cdot)$ is affine, we find
\begin{align}
\nonumber
\ell_{k+1}(\widetilde x)=
\ell_{k+1}(px+qy^k)&=p\ell_{k+1}(x)+q\ell_{k+1}(y^k)\\
\label{eq:ee.2}
   &=\dfrac{1}{A_{k+1}}\left[a_{k+1}\ell_{k+1}(x)+A_k\ell_{k+1}(y^k)\right].
\end{align}
On the other hand, using the well-know identity $\norm{pz+qw}^2=p\norm{z}^2+q\norm{w}^2-pq\norm{z-w}^2$, for all $z,w\in \HH$, we 
also find
\begin{align}
\nonumber
\norm{\widetilde x-y^{k+1}}^2&=\norm{p(x-y^{k+1})+q(y^k-y^{k+1})}^2\\
\nonumber
     &=p\norm{x-y^{k+1}}^2+q\norm{y^k-y^{k+1}}^2-pq\norm{x-y^k}^2\\
     \label{eq:norm.2}
     &=\dfrac{1}{A_{k+1}}\left[a_{k+1}\norm{x-y^{k+1}}^2+A_k\norm{y^k-y^{k+1}}^2-
     \left(\dfrac{a_{k+1}A_k}{A_{k+1}}\right)\norm{x-y^k}^2\right].
\end{align}

Combining \eqref{eq:recall.gamma}--\eqref{eq:norm.2}, we then obtain
\begin{align*}
\gamma_{k+1}(\widetilde x)&=\ell_{k+1}(\widetilde x)+\dfrac{\mu}{2}\left\|\widetilde x-y^{k+1}\right\|^2\\
     &=\dfrac{1}{A_{k+1}}
     \left[a_{k+1}\left(\ell_{k+1}(x)+\dfrac{\mu}{2}\norm{x-y^{k+1}}^2\right)
     +A_k\left(\ell_{k+1}(y^k)+\dfrac{\mu}{2}\norm{y^k-y^{k+1}}^2\right)
     -\left(\dfrac{\mu a_{k+1}A_k}{2A_{k+1}}\right)\norm{x-y^k}^2
     \right]\\
     &=\dfrac{1}{A_{k+1}}\left[a_{k+1}\gamma_{k+1}(x)+A_k\gamma_{k+1}(y^k)-
     \left(\dfrac{\mu a_{k+1}A_k}{2A_{k+1}}\right)\norm{x-y^k}^2\right],
\end{align*}
which is clearly equivalent to the desired identity.

\mgap

(b) First note that in view of \eqref{eq:def.xtilde} and \eqref{eq:def.xktilde} we have 
 $\widetilde x-z^k=\frac{a_{k+1}}{A_{k+1}}(x-x^k)$ and, analogously, we also have tilde $x-y^k=\frac{a_{k+1}}{A_{k+1}}(x-y^k)$. Hence,
 \begin{align}
  \label{eq:mud.xtilde}
 \norm{x-x^k}^2=\dfrac{A_{k+1}^2}{a_{k+1}^2}\norm{\widetilde x-z^k}^2
 \;\;\;\mbox{and}\;\;\;
 \norm{x-y^k}^2=\dfrac{A_{k+1}^2}{a_{k+1}^2}\norm{\widetilde x-y^k}^2.
 \end{align}
 Using Lemma \ref{lm:gauss}(c) and item (a) we find
 
 \begin{align*}
 A_k h(y^k)+
A_{k+1} \Gamma_{k+1}(x)+\dfrac{1}{2}\norm{x-x^0}^2&\geq \beta_k+
 \left(\dfrac{1+\mu A_k}{2}\right)\norm{x-x^k}^2 + a_{k+1}\gamma_{k+1}(x)
+ A_k \gamma_{k+1}(y^k)\\
&= \beta_k+A_{k+1}\gamma_{k+1}(\widetilde x)\\
&\hspace{1cm}+ \left(\dfrac{1+\mu A_k}{2}\right)\norm{x-x^k}^2
  +\left(\dfrac{\mu\, a_{k+1}A_k}{2 A_{k+1}}\right)\norm{x-y^k}^2,
\end{align*}
which in turn combined with \eqref{eq:mud.xtilde} and \eqref{eq:def.deltak} finishes the proof of item (b).

\mgap

(c) First let $p=\frac{(1+\mu A_k)A_{k+1}\lambda_{k+1}}{a_{k+1}^2}$,
$q=\dfrac{\mu A_k\lambda_{k+1}}{a_{k+1}}$ and note that $p,q\geq 0$ and, in view of \eqref{eq:cond.lamb}, 
$p+q=1$. From \eqref{eq:def.deltak} and the above definitions of $p$ and $q$, we obtain
\begin{align}
\nonumber
\Delta_k&=
\left(\dfrac{(1+\mu A_k)A_{k+1}}{2a_{k+1}^2}\right)\norm{\widetilde x-z^k}^2 
+\left(\dfrac{\mu A_k}{2a_{k+1}}\right)\norm{\widetilde x-y^k}^2\\
\nonumber
&= \dfrac{1}{2\lambda_{k+1}}
\left[p\norm{\widetilde x-z^k}^2 
+q\norm{\widetilde x-y^k}^2\right]\\
\label{eq:first.part}
 &= \dfrac{1}{2\lambda_{k+1}}\left[\norm{\widetilde x-(p z^k+qy^k)}^2+pq\norm{y^k-z^k}^2\right],
\end{align}
where we also used the well-known identity $p\norm{z}^2+q\norm{w}^2=\norm{pz+qw}^2+pq\norm{z-w}^2$, for $z,w\in \HH$.

Using \eqref{eq:def.xktilde}, the definitions of $p,q$, the fact that $p+q=1$, \eqref{eq:alg_xtil} and \eqref{eq:alg_A}, and some simple computations, we find
\begin{align}
\nonumber
p z^k+qy^k&=(1-q)\left(\dfrac{a_{k+1}}{A_{k+1}}x^k+\dfrac{A_k}{A_{k+1}}y^k\right)+q y^k\\
\nonumber
&=(1-q) \dfrac{a_{k+1}}{A_{k+1}}x^k+\left(\dfrac{A_k}{A_{k+1}}+q\left(1-\dfrac{A_k}{A_{k+1}}\right)\right)y^k\\
\nonumber
&=(1-q) \dfrac{a_{k+1}}{A_{k+1}}x^k+\left(\dfrac{A_k}{A_{k+1}}+q \dfrac{a_{k+1}}{A_{k+1}}\right)y^k\\
\nonumber
&=\left(1-\dfrac{\mu A_k\lambda_{k+1}}{a_{k+1}}\right) \dfrac{a_{k+1}}{A_{k+1}}x^k+\left(\dfrac{A_k}{A_{k+1}}+\left(\dfrac{\mu A_k\lambda_{k+1}}{a_{k+1}}\right)\dfrac{a_{k+1}}{A_{k+1}}\right)y^k\\
\nonumber
&=\left(\dfrac{a_{k+1}-\mu A_k \lambda_{k+1}}{A_{k+1}}\right)x^k+
 \left(\dfrac{A_k+\mu A_k\lambda_{k+1}}{A_{k+1}}\right)y^k\\
 \label{eq:ide.txk}
 &=\widetilde x^k.
\end{align}
On the other hand, using again \eqref{eq:def.xktilde} and the definitions of $p,q$, we also obtain
\begin{align}
\nonumber
pq\norm{y^k-z^k}^2&=\left(\frac{(1+\mu A_k)A_{k+1}\lambda_{k+1}}{a_{k+1}^2}\right)
\left(\dfrac{\mu A_k\lambda_{k+1}}{a_{k+1}}\right)\dfrac{a_{k+1}^2}{A_{k+1}^2}\norm{x^k-y^k}^2\\
\label{eq:part.pq}
&=\left(\dfrac{\mu(1+\mu A_k)\lambda_{k+1}^2A_k}{a_{k+1}A_{k+1}}\right)\norm{x^k-y^k}^2.
\end{align}
The desired result now follows directly from \eqref{eq:first.part}, \eqref{eq:ide.txk} and \eqref{eq:part.pq}.

\mgap

(d) From items (b) and (c), 
\begin{align}
 \label{eq:broyden}
\nonumber
A_k h(y^k)+
A_{k+1} \Gamma_{k+1}(x)+\dfrac{1}{2}\norm{x-x^0}^2\geq \beta_k&+
A_{k+1}\Big[\gamma_{k+1}(\widetilde x)+\dfrac{1}{2\lambda_{k+1}} \norm{\widetilde x-\widetilde x^k}^2\Big]\\[2mm]
&+\left(\dfrac{\mu(1+\mu A_k)\lambda_{k+1}A_k}{2a_{k+1}}\right)\norm{x^k-y^k}^2.
\end{align}
From \eqref{eq:def.gammak},
\begin{align}
\nonumber
\gamma_{k+1}(\widetilde x)+\dfrac{1}{2\lambda_{k+1}} \norm{\widetilde x-\widetilde x^k}^2&=
h(y^{k+1})\\
\label{eq:lete}
& \hspace{0.7cm}+\underbrace{\inner{v^{k+1}}{\widetilde x-y^{k+1}}+\dfrac{\mu}{2}\norm{\widetilde x-y^{k+1}}^2-
\varepsilon_{k+1}+\dfrac{1}{2\lambda_{k+1}} \norm{\widetilde x-\widetilde x^k}^2}_{=:q_{k+1}(\widetilde x)}.
\end{align}
On the other hand, from Lemma \ref{lm:wolfe}(c) applied to $q_{k+1}(\cdot)$ and \eqref{eq:alg_err},
\begin{align*}
q_{k+1}(\widetilde x)\geq \left(\dfrac{1-\sigma^2}{2\lambda_{k+1}}\right)\norm{y^{k+1}-\widetilde x^k}^2,
\end{align*}
which in turn combined with \eqref{eq:lete} gives
\begin{align*}
 \gamma_{k+1}(\widetilde x)+\dfrac{1}{2\lambda_{k+1}} \norm{\widetilde x-\widetilde x^k}^2
 \geq h(y^{k+1})+\left(\dfrac{1-\sigma^2}{2\lambda_{k+1}}\right)\norm{y^{k+1}-\widetilde x^k}^2.
\end{align*}
The desired result now follows by the substitution of the latter inequality in \eqref{eq:broyden}.
\end{proof}

\mgap

Next is our first result on the iteration-complexity of Algorithm \ref{alg:main}. 
Item (b) follows trivially from item (a), 
which will be derived from Lemmas \ref{lm:initial}, \ref{lm:gauss} and \ref{lm:newton}. 
The main results on the iteration-complexity of Algorithm \ref{alg:main} will then be presented in 
Theorem \ref{th:main_comp} below.

\mgap

\begin{proposition}
 \label{pr:cambio}
 Consider the sequences evolved by \emph{Algorithm \ref{alg:main}}, let $x^*$ denote the (unique) solution of \eqref{eq:co} and let 
 \begin{align}
  \label{eq:def.d0}
  d_0:=\norm{x^*-x^0}.
 \end{align}
 The following holds:
 \begin{itemize}
 \item[\emph{(a)}] For all $k\geq 1$ and $x\in \HH$,
\begin{align*}
 & A_{k}\left[h(y^{k})-h(x)\right]+\left(\dfrac{1-\sigma^2}{2}\right)\sum_{j=1}^{k}\,
\dfrac{A_j}{\lambda_j}\norm{y^j-\widetilde x^{\,j-1}}^2\\
& \hspace{1cm}
+\sum_{j=1}^{k}\,\left(\dfrac{\mu(1+\mu A_{j-1})\lambda_{j}A_{j-1}}{2a_{j}}\right)\norm{x^{j-1}-y^{j-1}}^2
+ \left(\dfrac{1+\mu A_{k}}{2}\right)\norm{x-x^{k}}^2\leq \dfrac{1}{2}\norm{x-x^0}^2.
\end{align*}
 \item[\emph{(b)}] If $\sigma<1$, for all $k\geq 1$,
 \begin{align}
  \label{eq:sting3}
  \sum_{j=1}^{k}\,
\dfrac{A_j}{\lambda_j}\norm{y^j-\widetilde x^{\,j-1}}^2\leq \dfrac{d_0^2}{1-\sigma^2},
\qquad \forall k\geq 1.
 \end{align}
 \end{itemize}
\end{proposition}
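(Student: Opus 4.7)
The plan is to prove (a) by reducing it, via Lemmas~\ref{lm:gauss}(a) and~\ref{lm:initial}(a), to the $x$-free inequality
\begin{align*}
\beta_k \geq A_k h(y^k) + S_k, \qquad k \geq 0,
\end{align*}
where
\begin{align*}
S_k := \left(\dfrac{1-\sigma^2}{2}\right)\sum_{j=1}^{k}\dfrac{A_j}{\lambda_j}\norm{y^j-\widetilde x^{\,j-1}}^2 + \sum_{j=1}^{k}\left(\dfrac{\mu(1+\mu A_{j-1})\lambda_j A_{j-1}}{2a_j}\right)\norm{x^{j-1}-y^{j-1}}^2
\end{align*}
collects the two summations in (a). Indeed, Lemma~\ref{lm:gauss}(a) rewrites $\tfrac{1+\mu A_k}{2}\norm{x-x^k}^2$ in terms of $\beta_k$ and $A_k\Gamma_k(x)$, and combining Lemma~\ref{lm:initial}(a) with~\eqref{eq:def.Gammak} yields $A_k\Gamma_k(x) \leq A_k h(x)$ uniformly in $x$; substituting these two facts into the bound claimed in (a) leaves exactly the displayed $x$-independent inequality.

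The latter is established by induction on $k$. The base $k=0$ is immediate, since $\beta_0 = A_0 = S_0 = 0$. For the inductive step, apply Lemma~\ref{lm:newton}(d) at the specific choice $x = x^{k+1}$: by Lemma~\ref{lm:initial}(b) and the definition~\eqref{eq:def.betak} of $\beta_{k+1}$, the left-hand side of that inequality collapses to $A_k h(y^k) + \beta_{k+1}$, while the two quadratic terms on the right are precisely the $j=k+1$ summands of $S_{k+1}-S_k$. This gives
\begin{align*}
A_k h(y^k) + \beta_{k+1} \geq \beta_k + A_{k+1} h(y^{k+1}) + (S_{k+1}-S_k),
\end{align*}
and substituting the inductive hypothesis $\beta_k - A_k h(y^k) \geq S_k$ yields $\beta_{k+1} \geq A_{k+1} h(y^{k+1}) + S_{k+1}$, closing the induction.

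Part (b) follows by choosing $x = x^*$ in (a): the gap $h(y^k)-h(x^*)$, the second summation, and the $\tfrac{1+\mu A_k}{2}\norm{x^*-x^k}^2$ term are all non-negative and may be dropped from the left-hand side, while the right-hand side equals $d_0^2/2$ by~\eqref{eq:def.d0}. Dividing by $(1-\sigma^2)/2 > 0$ produces~\eqref{eq:sting3}. The only non-mechanical moment in the argument is the evaluation of Lemma~\ref{lm:newton}(d) at $x = x^{k+1}$, which is what converts its $x$-dependent statement into a clean telescoping recursion on $\beta_k - A_k h(y^k)$; everything else is direct bookkeeping using~\eqref{eq:alg_A} and the preceding lemmas.
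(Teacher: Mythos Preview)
Your proof is correct and follows essentially the same route as the paper. The only cosmetic difference is that the paper obtains the key recursion $A_k h(y^k)+\beta_{k+1}\geq \beta_k + A_{k+1}h(y^{k+1}) + (S_{k+1}-S_k)$ by taking the infimum over $x$ in Lemma~\ref{lm:newton}(d) (equivalently, your evaluation at $x=x^{k+1}$) and then telescopes the resulting inequalities directly, whereas you phrase the same telescoping as an induction on $\beta_k - A_k h(y^k)\geq S_k$; the subsequent use of Lemma~\ref{lm:gauss}(a) and $A_k\Gamma_k(x)\leq A_k h(x)$, and the derivation of~(b) from~(a) at $x=x^*$, match the paper exactly.
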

\begin{proof}
(a) From Lemma \ref{lm:newton}(d) and the definition of $\beta_{k+1}$ -- see \eqref{eq:def.betak} -- we obtain,
for all $k\geq 0$,
\begin{align*}
A_k h(y^k)+\beta_{k+1}\geq \beta_k+
A_{k+1}h(y^{k+1})&+\left(\dfrac{1-\sigma^2}{2}\right)
\left(\dfrac{A_{k+1}}{\lambda_{k+1}}\norm{y^{k+1}-\widetilde x^k}^2\right)\\[2mm]
&+\left(\dfrac{\mu(1+\mu A_k)\lambda_{k+1}A_k}{2a_{k+1}}\right)\norm{x^k-y^k}^2,
\end{align*}
and so, for all $k\geq 0$, 
\begin{align*}
\underbrace{\sum_{j=0}^{k}\,\left[\beta_{j+1}-\beta_j\right]}_{\beta_{k+1}-\beta_0}
\geq \underbrace{\sum_{j=0}^{k}\,\left[A_{j+1}h(y^{j+1})-A_jh(y^j)\right]}_{A_{k+1}h(y^{k+1})-A_0h(y^0)}
&+\left(\dfrac{1-\sigma^2}{2}\right)\sum_{j=0}^{k}\,
\dfrac{A_{j+1}}{\lambda_{j+1}}\norm{y^{j+1}-\widetilde x^{j}}^2\\
&+\sum_{j=0}^{k}\,\left(\dfrac{\mu(1+\mu A_j)\lambda_{j+1}A_j}{2a_{j+1}}\right)\norm{x^j-y^j}^2,
\end{align*}
which, since $\beta_0=A_0=0$, yields, for all $k\geq 0$,
\begin{align*}
\beta_{k+1}\geq A_{k+1}h(y^{k+1})+\left(\dfrac{1-\sigma^2}{2}\right)\sum_{j=1}^{k+1}\,
\dfrac{A_j}{\lambda_j}\norm{y^j-\widetilde x^{\,j-1}}^2
+\sum_{j=1}^{k+1}\,\left(\dfrac{\mu(1+\mu A_{j-1})\lambda_{j}A_{j-1}}{2a_{j}}\right)\norm{x^{j-1}-y^{j-1}}^2.
\end{align*}

By adding $\left(\frac{1+\mu A_{k+1}}{2}\right)\norm{x-x^{k+1}}^2$ in both sides of the latter inequality, we obtain,
for all $k\geq 0$,
\begin{align*}
\beta_{k+1}+\left(\dfrac{1+\mu A_{k+1}}{2}\right)\norm{x-x^{k+1}}^2&\geq A_{k+1}h(y^{k+1})+\left(\dfrac{1-\sigma^2}{2}\right)\sum_{j=1}^{k+1}\,
\dfrac{A_j}{\lambda_j}\norm{y^j-\widetilde x^{\,j-1}}^2\\
&+\sum_{j=1}^{k+1}\,\left(\dfrac{\mu(1+\mu A_{j-1})\lambda_{j}A_{j-1}}{2a_{j}}\right)\norm{x^{j-1}-y^{j-1}}^2\\
&+\left(\dfrac{1+\mu A_{k+1}}{2}\right)\norm{x-x^{k+1}}^2.
\end{align*}
Using Lemma \ref{lm:gauss}(a) we then find, for all $k\geq 0$,
\begin{align}
 \label{eq:aint}
\nonumber
 A_{k+1}\Gamma_{k+1}(x)+\dfrac{1}{2}\norm{x-x^0}^2&\geq A_{k+1}h(y^{k+1})+\left(\dfrac{1-\sigma^2}{2}\right)\sum_{j=1}^{k+1}\,
\dfrac{A_j}{\lambda_j}\norm{y^j-\widetilde x^{\,j-1}}^2\\
\nonumber
&+\sum_{j=1}^{k+1}\,\left(\dfrac{\mu(1+\mu A_{j-1})\lambda_{j}A_{j-1}}{2a_{j}}\right)\norm{x^{j-1}-y^{j-1}}^2\\
&+ \left(\dfrac{1+\mu A_{k+1}}{2}\right)\norm{x-x^{k+1}}^2.
\end{align}

Note now that from \eqref{eq:def.Gammak} and Lemma \ref{lm:initial}(a) we obtain, for all $k\geq 0$,
\[
 A_{k+1}\Gamma_{k+1}(x)=\sum_{j=1}^{k+1}\,a_j\gamma_j(x)\leq A_{k+1}h(x),
\]
which combined with \eqref{eq:aint} yields, for all $k\geq 1$,
\begin{align*}
 \dfrac{1}{2}\norm{x-x^0}^2&\geq A_{k}\left[h(y^{k})-h(x)\right]+\left(\dfrac{1-\sigma^2}{2}\right)\sum_{j=1}^{k}\,
\dfrac{A_j}{\lambda_j}\norm{y^j-\widetilde x^{\,j-1}}^2\\
& \hspace{1cm}
+\sum_{j=1}^{k}\,\left(\dfrac{\mu(1+\mu A_{j-1})\lambda_{j}A_{j-1}}{2a_{j}}\right)\norm{x^{j-1}-y^{j-1}}^2
+ \left(\dfrac{1+\mu A_{k}}{2}\right)\norm{x-x^{k}}^2.
\end{align*}

\mgap

(b) This follows trivially from item (a) and \eqref{eq:def.d0}.
\end{proof}

\mgap

\begin{lemma}
\label{lm:standard}
For all $k\geq 0$,
\begin{align}
 \label{eq:standard2}
\left(1-\sigma\sqrt{1+\lambda_{k+1}\mu}\right)\norm{y^{k+1}-\widetilde x^k}\leq
\norm{\lambda_{k+1}v^{k+1}}\leq 
\left(1+\sigma\sqrt{1+\lambda_{k+1}\mu}\right)\norm{y^{k+1}-\widetilde x^k}.
\end{align}
\end{lemma}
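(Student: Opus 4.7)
The plan is to derive both inequalities directly from the error condition \eqref{eq:alg_err} by a simple triangle-inequality argument. The key observation is that the left-hand side of \eqref{eq:alg_err} contains the nonnegative term $2\lambda_{k+1}\varepsilon_{k+1}$, which we can drop to obtain a pure bound on $\norm{\lambda_{k+1}v^{k+1}+y^{k+1}-\widetilde x^k}$.

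First, I would use the inequality \eqref{eq:alg_err} together with the fact that $\varepsilon_{k+1}\geq 0$ and $\lambda_{k+1}>0$ to conclude that
\[
\norm{\lambda_{k+1}v^{k+1}+y^{k+1}-\widetilde x^k}^2 \leq \sigma^2(1+\lambda_{k+1}\mu)\norm{y^{k+1}-\widetilde x^k}^2,
\]
and hence, after taking square roots,
\[
\norm{\lambda_{k+1}v^{k+1}+y^{k+1}-\widetilde x^k} \leq \sigma\sqrt{1+\lambda_{k+1}\mu}\,\norm{y^{k+1}-\widetilde x^k}.
\]

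Next, writing $\lambda_{k+1}v^{k+1} = \bigl(\lambda_{k+1}v^{k+1}+y^{k+1}-\widetilde x^k\bigr) - \bigl(y^{k+1}-\widetilde x^k\bigr)$, I would apply the triangle inequality in both directions to obtain the upper bound
\[
\norm{\lambda_{k+1}v^{k+1}} \leq \norm{\lambda_{k+1}v^{k+1}+y^{k+1}-\widetilde x^k} + \norm{y^{k+1}-\widetilde x^k}
\]
and the reverse triangle inequality to obtain the lower bound
\[
\norm{\lambda_{k+1}v^{k+1}} \geq \norm{y^{k+1}-\widetilde x^k} - \norm{\lambda_{k+1}v^{k+1}+y^{k+1}-\widetilde x^k}.
\]
Substituting the bound derived in the first step into both displays yields exactly \eqref{eq:standard2}.

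There is no significant obstacle here; the proof is a routine two-line argument once one recognizes that the relative-error condition \eqref{eq:alg_err} (after dropping the $\varepsilon$-term) directly bounds $\norm{\lambda_{k+1}v^{k+1}+y^{k+1}-\widetilde x^k}$ in terms of $\norm{y^{k+1}-\widetilde x^k}$. The only minor point worth noting is that the lower bound in \eqref{eq:standard2} is meaningful only when $\sigma\sqrt{1+\lambda_{k+1}\mu}<1$; otherwise it is vacuously true (the factor becomes nonpositive), so no case distinction is needed in the proof itself.
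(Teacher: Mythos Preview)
Your proof is correct and follows exactly the approach indicated in the paper: use the inequality in \eqref{eq:alg_err}, drop the nonnegative term $2\lambda_{k+1}\varepsilon_{k+1}$, and apply the triangle inequality (in both directions) to the decomposition $\lambda_{k+1}v^{k+1} = \bigl(\lambda_{k+1}v^{k+1}+y^{k+1}-\widetilde x^k\bigr) - \bigl(y^{k+1}-\widetilde x^k\bigr)$. The paper's own proof is just a one-line sketch of this same argument.
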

\begin{proof}
 The proof follows from the inequality in \eqref{eq:alg_err}, the fact that $\varepsilon_{k+1}\geq 0$ and a simple argument based on the triangle inequality.
\end{proof}

\mgap

Since, under mild regularity assumptions on $f$ and $g$, problem \eqref{eq:co} is equivalent to the inclusion
\begin{align}
 \label{eq:mipfg}
0\in \partial f(x)+\partial g(x),
\end{align}
it is natural to attempt to evaluate the residuals produced by Algorithm \ref{alg:main}  in the light of \eqref{eq:mipfg}, and this is exactly what Theorem \ref{th:main_comp}(b) is about. 
Note that if we set $v^{k+1}=0$ and $\varepsilon_{k+1}=0$ in \eqref{eq:jeffblack}, then it follows that $x:=y^{k+1}$
satisfies the inclusion \eqref{eq:mipfg}.

\mgap

As we mentioned before, Theorem \ref{th:main_comp} below is our main result on the iteration-complexity of
Algorithm \ref{alg:main}. 

\mgap

\begin{theorem}[{\bf Convergence rates for Algorithm \ref{alg:main}}]
 \label{th:main_comp}
 Consider the sequences evolved by \emph{Algorithm \ref{alg:main}}, let $x^*$ be the (unique) solution of \eqref{eq:co} and let $d_0$ be as in
 \eqref{eq:def.d0}.
 Then, the following holds:
 \begin{itemize}
 \item[\emph{(a)}] For all $k\geq 1$,
 \begin{align*}
   h(y^{k})-h(x^*)\leq \dfrac{d_0^2}{2 A_k},\qquad \norm{x^*-y^k}^2\leq \dfrac{d_0^2}{\mu A_k},\qquad 
   \norm{x^*-x^{k}}^2\leq \dfrac{d_0^2}{1+\mu A_k}.
 \end{align*}
\item[\emph{(b)}] For all $k\geq 1$, 
\begin{align}
 \label{eq:jeffblack}
\begin{cases}
&v^{k+1}\in \partial_{\varepsilon_{k+1}} f(y^{k+1})+\partial g(y^{k+1}),\\[2mm]
&\norm{v^{k+1}}^2\leq 
\left(\dfrac{1+\sigma\sqrt{1+\mu\lambda_{k+1}}}{6^{-1/2} \lambda_{k+1}}\right)^2 \dfrac{d_0^2}{\mu A_k},\\[6mm]
&\varepsilon_{k+1}\leq \left(\dfrac{3\sigma^2}{\lambda_{k+1}}\right) \dfrac{d_0^2}{\mu A_k}.
\end{cases}
\end{align}
 \end{itemize}
\end{theorem}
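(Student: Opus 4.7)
The plan is to derive both parts from Proposition \ref{pr:cambio}(a) applied at $x = x^*$, together with strong convexity of $h = f + g$ (inherited from $g$, since $f$ is convex) and Lemma \ref{lm:standard}. Setting $x = x^*$ in Proposition \ref{pr:cambio}(a) and dropping the two non-negative summation terms on the left produces the potential inequality
\[
A_k\bigl[h(y^k) - h(x^*)\bigr] + \left(\frac{1 + \mu A_k}{2}\right)\|x^* - x^k\|^2 \;\leq\; \frac{d_0^2}{2}.
\]
Since $x^*$ is the minimizer of the $\mu$-strongly convex $h$, we also have $h(y^k) - h(x^*) \geq (\mu/2)\|y^k - x^*\|^2 \geq 0$. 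Dropping the squared-norm term on the left yields the function-value bound in (a); dropping the (non-negative) function-value term yields the $\|x^* - x^k\|^2$ bound; and feeding the strong convexity inequality back into the function-value bound yields the $\|x^* - y^k\|^2$ bound. This handles (a).

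For part (b), the inclusion in \eqref{eq:jeffblack} is simply the first line of \eqref{eq:alg_err}. The inequality in \eqref{eq:alg_err} forces $2\lambda_{k+1}\varepsilon_{k+1} \leq \sigma^2\|y^{k+1} - \widetilde x^k\|^2$, while the upper estimate in Lemma \ref{lm:standard} gives
\[
\|v^{k+1}\|^2 \;\leq\; \left(\frac{1 + \sigma\sqrt{1 + \mu\lambda_{k+1}}}{\lambda_{k+1}}\right)^2 \|y^{k+1} - \widetilde x^k\|^2.
\]
Hence both sought estimates reduce to the single bound $\|y^{k+1} - \widetilde x^k\|^2 \leq 6 d_0^2 /(\mu A_k)$, which is exactly what produces the constants $3\sigma^2$ and $6^{-1/2}$ in \eqref{eq:jeffblack}.

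To establish the latter, observe from \eqref{eq:alg_xtil} that $\widetilde x^k = \alpha_k x^k + (1 - \alpha_k) y^k$ with $\alpha_k := (a_{k+1} - \mu A_k\lambda_{k+1})/(A_k + a_{k+1})$. The quadratic formula \eqref{eq:alg_a} gives $a_{k+1} \geq (1 + 2\mu A_k)\lambda_{k+1}$, so $\alpha_k \in [0, 1]$ and $\widetilde x^k$ is a genuine convex combination of $x^k$ and $y^k$. Convexity of $\|\cdot\|^2$ followed by $\|a + b\|^2 \leq 2\|a\|^2 + 2\|b\|^2$ then yields
\[
\|y^{k+1} - \widetilde x^k\|^2 \;\leq\; 2\|y^{k+1} - x^*\|^2 + 2\alpha_k\|x^k - x^*\|^2 + 2(1 - \alpha_k)\|y^k - x^*\|^2,
\]
and part (a) bounds each of the three squared distances to $x^*$ by $d_0^2/(\mu A_k)$, using $A_{k+1} \geq A_k$ and $1 + \mu A_k \geq \mu A_k$ in the first two cases. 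This yields the factor $6$.

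The only non-mechanical checkpoint is verifying that $\widetilde x^k$ genuinely lies in the segment between $x^k$ and $y^k$, i.e.\ $\alpha_k \in [0,1]$, since it is this convex-combination structure that lets the triangle-inequality expansion produce a constant independent of $\mu$ and $\lambda_{k+1}$. All the heavy lifting has already been absorbed into Proposition \ref{pr:cambio} and Lemma \ref{lm:standard}; once the convex-combination property is in hand, what remains is routine manipulation.
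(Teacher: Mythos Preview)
Your proposal is correct and follows essentially the same route as the paper. Part (a) is identical: apply Proposition \ref{pr:cambio}(a) at $x=x^*$, drop the non-negative sums, and use $\mu$-strong convexity of $h$ for the $\|x^*-y^k\|$ bound. For part (b), the paper also reduces everything to $\|y^{k+1}-\widetilde x^k\|^2\leq 6d_0^2/(\mu A_k)$ via Lemma \ref{lm:standard} and the error condition, and obtains that bound by splitting through $x^*$ and using that $\widetilde x^k$ is a convex combination of $x^k$ and $y^k$; the only difference is that the paper simply asserts the convex-combination property, whereas you supply the verification $a_{k+1}\geq (1+2\mu A_k)\lambda_{k+1}$ from \eqref{eq:alg_a}. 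Incidentally, keeping the weights $\alpha_k,1-\alpha_k$ as you do actually yields the sharper constant $4$ in place of $6$, but of course $4\leq 6$ so the stated bound follows.
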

\begin{proof}
(a) Note that the bounds on $h(y^k)-h(x^*)$ and $\norm{x^*-x^k}^2$ follow directly from Proposition \ref{pr:cambio}(a)
with $x=x^*$ and \eqref{eq:def.d0}. Now, since $h(\cdot)$ is $\mu$-strongly convex and $0\in \partial h(x^*)$, one can use the 
inequality (see, e.g., \cite[Proposition 6(c)]{roc-mon.sjco76}) 
$h(x)\geq h(x^*)+\frac{\mu}{2}\norm{x-x^*}^2$, for all $x\in \HH$, with $x=y^k$ and the bound on
$h(y^k)-h(x^*)$ to conclude that
$\norm{y^k-x^*}^2\leq \frac{2}{\mu}\left(h(y^k)-h(x^*)\right)\leq \frac{d_0^2}{\mu A_k}$.

(b) First, note that the inclusion in \eqref{eq:jeffblack} follows from the inclusion in \eqref{eq:alg_err}. 
Since we will use the second inequality in \eqref{eq:standard2} to prove the 
inequality for $\norm{v^{k+1}}^2$, it follows that we first have to bound the term $\norm{y^{k+1}-\widetilde x^{k}}^2$.
To this end, note that from the second inequality in item (a) with $k=k+1$ and the fact that $A_{k+1}\geq A_k$,
\begin{align}
 \label{eq:bjovi}
\nonumber
 \norm{y^{k+1}-\widetilde x^{k}}^2&\leq 2\left(\norm{x^*-y^{k+1}}^2 +\norm{\widetilde x^{k}-x^*}^2\right)\\
   & \leq 2\left(\dfrac{d_0^2}{\mu A_k}+\norm{\widetilde x^{k}-x^*}^2\right).
\end{align}
We now have to bound the second term in \eqref{eq:bjovi}. Since, from \eqref{eq:alg_xtil}, $\widetilde x^k$ is a convex combination of $x^k$ and $y^k$, it follows that
\begin{align}
 \label{eq:bjovi2}
\nonumber
\norm{\widetilde x^{k}-x^*}^2&\leq \norm{x^*-x^k}^2+\norm{x^*-y^k}^2\\
\nonumber
   &\leq \dfrac{d_0^2}{1+\mu A_k} + \dfrac{d_0^2}{\mu A_k}\\
   &\leq \dfrac{2d_0^2}{\mu A_k},
\end{align}
where in the second inequality we used the second and third inequalities in item (a). 
Now using \eqref{eq:bjovi} and \eqref{eq:bjovi2}, we find
\begin{align}
 \label{eq:bjovi3}
 \norm{y^{k+1}-\widetilde x^k}^2\leq 6\dfrac{d_0^2}{\mu A_k}.
\end{align}
To finish the proof of (b), note that using \eqref{eq:bjovi3}, we obtain the desired bounds on $\norm{v^{k+1}}^2$ and $\varepsilon_{k+1}$ as a consequence of the second inequality in \eqref{eq:standard2} and the fact that
$2\lambda_{k+1}\varepsilon_{k+1}\leq \sigma^2\norm{y^{k+1}-\widetilde x^k}^2$ (see \eqref{eq:alg_err}), respectively. 
\end{proof}

\mgap

Next result is motivated by the fact that the rate of convergence of Algorithm \ref{alg:main} presented in 
Theorem \ref{th:main_comp} is given in terms of the sequence $\{A_k\}$.
We also mention that the proof of Lemma \ref{lm:bbking} follows the same outline of an argument given
in p. 13 of \cite{bach-preprint21}.

\mgap

\begin{lemma}
 \label{lm:bbking}
The following holds:
\begin{itemize}
\item[\emph{(a)}] For all $k\geq 1$,
\begin{align}
 \label{eq:stingz}
A_{k+1} \geq
\lambda_1 \prod_{j=2}^{k+1}
\left(\dfrac{1}{1-\sqrt{\dfrac{\mu\lambda_{j}}{1+\mu\lambda_{j}}}}\right).
\end{align}
\item[\emph{(b)}] For all $k\geq 1$,
\begin{align}
 \label{eq:sting2}
A_{k+1} \geq \lambda_1
 \prod_{j=2}^{k+1}\left(1+2\mu\lambda_{j}\right).
\end{align}
\end{itemize}
\end{lemma}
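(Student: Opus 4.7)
The plan is to first re-express the identity \eqref{eq:cond.lamb2} defining $a_{k+1}$ in a cleaner form relating $A_k$, $A_{k+1}$, and $\lambda_{k+1}$ directly, then extract from it a per-step amplification factor for $A_{k+1}/A_k$, and finally telescope, using the base case $A_1=\lambda_1$ from \eqref{eq:A1a1}. Part (b) will then follow from part (a) by an elementary inequality.

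The central algebraic step is as follows. Substituting $a_{k+1}=A_{k+1}-A_k$ into \eqref{eq:cond.lamb2} gives
\begin{align*}
(A_{k+1}-A_k)^2 = \lambda_{k+1}A_{k+1}(1+2\mu A_k) - \mu\lambda_{k+1}A_k^2.
\end{align*}
Adding $\mu\lambda_{k+1}(A_{k+1}-A_k)^2$ to both sides and expanding, the right-hand side collapses nicely to yield the clean identity
\begin{align*}
(A_{k+1}-A_k)^2\,(1+\mu\lambda_{k+1}) \;=\; \lambda_{k+1}\,A_{k+1}\,(1+\mu A_{k+1}).
\end{align*}

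For part (a), taking positive square roots and rearranging gives $A_k = A_{k+1} - \sqrt{\lambda_{k+1}A_{k+1}(1+\mu A_{k+1})/(1+\mu\lambda_{k+1})}$. Using the trivial bound $1+\mu A_{k+1}\geq \mu A_{k+1}$ inside the square root yields $A_k\leq A_{k+1}\bigl(1-\sqrt{\mu\lambda_{k+1}/(1+\mu\lambda_{k+1})}\bigr)$, i.e.
$A_{k+1}/A_k \geq 1/\bigl(1-\sqrt{\mu\lambda_{k+1}/(1+\mu\lambda_{k+1})}\bigr)$. Telescoping this over $j=2,\dots,k+1$ together with $A_1=\lambda_1$ delivers \eqref{eq:stingz}.

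For part (b), it suffices to establish the scalar inequality $1+2\mu\lambda \leq 1/\bigl(1-\sqrt{\mu\lambda/(1+\mu\lambda)}\bigr)$ for all $\lambda\geq 0$, which after cross-multiplying and squaring reduces to $4\mu\lambda(1+\mu\lambda)\leq(1+2\mu\lambda)^2$, i.e.\ $0\leq 1$. Applying this termwise inside the product in (a) yields \eqref{eq:sting2}. The main obstacle is the algebraic trick that produces the clean identity above from \eqref{eq:cond.lamb2}; once that is in hand, the rest of the argument is elementary and requires only the monotone telescoping and a one-line scalar estimate.
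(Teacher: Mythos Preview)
Your proof is correct. The per-step amplification factor you obtain,
\[
\frac{A_{k+1}}{A_k}\;\geq\;\frac{1}{1-\sqrt{\dfrac{\mu\lambda_{k+1}}{1+\mu\lambda_{k+1}}}},
\]
is exactly the same as the paper's (see \eqref{eq:sting}), so the telescoping step and the base case $A_1=\lambda_1$ coincide. The difference lies only in how this ratio is derived. The paper works directly with the explicit formula \eqref{eq:alg_a} for $a_{k+1}$, drops the ``$+1$'' in $(1+2\mu A_k)$ and $(1+\mu A_k)$, and then uses the algebraic identity $1/(1-\sqrt{x/(1+x)})=1+x+\sqrt{x(1+x)}$. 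You instead first massage \eqref{eq:cond.lamb2} into the exact identity
\[
(A_{k+1}-A_k)^2(1+\mu\lambda_{k+1})=\lambda_{k+1}A_{k+1}(1+\mu A_{k+1}),
\]
and only then make the single approximation $1+\mu A_{k+1}\geq \mu A_{k+1}$. Your route isolates the approximation step more cleanly and avoids invoking the explicit root formula, at the cost of the algebraic trick needed to produce the identity. For part~(b), the paper bounds $\sqrt{\mu\lambda_{k+1}(1+\mu\lambda_{k+1})}\geq \mu\lambda_{k+1}$ in the intermediate expression \eqref{eq:blackwell02} before telescoping, whereas you apply the equivalent scalar inequality $1+2\mu\lambda\leq 1/\bigl(1-\sqrt{\mu\lambda/(1+\mu\lambda)}\bigr)$ termwise to the final product; these are the same estimate expressed differently.
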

\begin{proof}
(a) From \eqref{eq:alg_a},
\begin{align*}
a_{k+1}&=\dfrac{(1+2\mu A_k)\lambda_{k+1}+\sqrt{(1+2\mu A_k)^2\lambda_{k+1}^2
+4(1+\mu A_k)A_k\lambda_{k+1}}}{2}\\
   &\geq \dfrac{(2\mu A_k)\lambda_{k+1}+\sqrt{(2\mu A_k)^2\lambda_{k+1}^2
+4(\mu A_k)A_k\lambda_{k+1}}}{2}\\
&=\dfrac{(2\mu A_k)\lambda_{k+1}+2A_k\sqrt{\mu^2\lambda_{k+1}^2
+\mu \lambda_{k+1}}}{2}\\
&= A_k\left[\mu\lambda_{k+1}+\sqrt{\mu\lambda_{k+1}(1+\mu\lambda_{k+1})}\right].
\end{align*}
Hence, from \eqref{eq:alg_A},
\begin{align}
\nonumber
 A_{k+1}&=A_k+a_{k+1}\\
 \nonumber
              & \geq A_k+ A_k\left[\mu\lambda_{k+1}+\sqrt{\mu\lambda_{k+1}(1+\mu\lambda_{k+1})}\right]\\
  \label{eq:blackwell02}
              &= A_k\left[1+\mu\lambda_{k+1}+\sqrt{\mu\lambda_{k+1}(1+\mu\lambda_{k+1})}\right]\\[2mm]
              \label{eq:sting}
              &= A_k\left(\dfrac{1}{1-\sqrt{\dfrac{\mu\lambda_{k+1}}{1+\mu\lambda_{k+1}}}}\right),   
\end{align}
where in the last equality we used the identity $1/\left(1-\sqrt{\frac{x}{1+x}}\right)=1+x+\sqrt{x(1+x)}$ with 
$x=\mu\lambda_{k+1}$.
Note now that \eqref{eq:stingz} follows directly from \eqref{eq:sting} and the fact that  $A_1=\lambda_1$ -- see \eqref{eq:A1a1}.

\mgap

(b) Using \eqref{eq:blackwell02}, the fact that 
$\sqrt{\mu\lambda_{k+1}(1+\mu\lambda_{k+1})}\geq \mu\lambda_{k+1}$ and a similar reasoning to the proof of item (a), we obtain that \eqref{eq:sting2} holds for all $k\geq 1$.
\end{proof}

\mgap

Next is a corollary of Lemma \ref{lm:bbking}(a) for the special case that the sequence 
$\{\lambda_k\}$ is bounded away from zero. Lemma \ref{lm:bbking}(b) will be useful later in Section \ref{sec:ls}.

\mgap

\begin{corollary}
 \label{cor:bounded}
Assume that $\lambda_k\geq \underline{\lambda}>0$, for all $k\geq 1$, and define $\alpha\in (0,1)$ as
\begin{align}
 \label{eq:def.alpha}
 \alpha:=\sqrt{\dfrac{\mu\underline{\lambda}}{1+\mu\underline{\lambda}}}.
\end{align}
Then, for all $k\geq 1$,
\begin{align}
 \label{eq:borwein}
   A_k\geq \underline{\lambda} \left(\dfrac{1}{1-\alpha}\right)^{k-1}.
\end{align}
\end{corollary}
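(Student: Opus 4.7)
The plan is to derive \eqref{eq:borwein} directly from the bound \eqref{eq:stingz} in Lemma \ref{lm:bbking}(a) by a simple monotonicity argument. The key observation is that the scalar function $\phi(t):=\sqrt{\mu t/(1+\mu t)}$ is strictly increasing on $(0,\infty)$, since $t\mapsto \mu t/(1+\mu t) = 1-1/(1+\mu t)$ is strictly increasing and the square root preserves monotonicity. In particular, $\phi$ maps $(0,\infty)$ into $(0,1)$, so the definition \eqref{eq:def.alpha} indeed gives $\alpha=\phi(\underline{\lambda})\in (0,1)$, and the hypothesis $\lambda_j\geq \underline{\lambda}$ yields $\phi(\lambda_j)\geq \alpha$ for every $j\geq 1$.

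From this I would conclude that, for each $j\geq 2$,
\[
\frac{1}{1-\sqrt{\dfrac{\mu\lambda_j}{1+\mu\lambda_j}}}
 \;=\; \frac{1}{1-\phi(\lambda_j)}
 \;\geq\; \frac{1}{1-\alpha}.
\]
Plugging this termwise into the product in \eqref{eq:stingz} and using also $\lambda_1\geq \underline{\lambda}$ (the factor in front) gives, for all $k\geq 1$,
\[
A_{k+1} \;\geq\; \lambda_1 \prod_{j=2}^{k+1}\frac{1}{1-\alpha}
 \;\geq\; \underline{\lambda}\left(\frac{1}{1-\alpha}\right)^{k}.
\]
Reindexing ($k\to k-1$) yields \eqref{eq:borwein} for all $k\geq 2$.

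Finally, the boundary case $k=1$ is handled separately but trivially: from \eqref{eq:A1a1}, $A_1=\lambda_1\geq\underline{\lambda}=\underline{\lambda}\bigl(1/(1-\alpha)\bigr)^{0}$, so \eqref{eq:borwein} holds at $k=1$ as well. There is no substantive obstacle here — the content is entirely in Lemma \ref{lm:bbking}(a); the corollary is just a matter of replacing the iteration-dependent factors by their uniform lower bound under the standing assumption $\lambda_k\geq \underline{\lambda}$.
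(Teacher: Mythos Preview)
Your proposal is correct and follows essentially the same approach as the paper: both use the monotonicity of $t\mapsto \mu t/(1+\mu t)$ to bound each factor in \eqref{eq:stingz} below by $1/(1-\alpha)$, apply $\lambda_1\geq\underline{\lambda}$ to the leading factor, reindex to cover $k\geq 2$, and handle $k=1$ separately via \eqref{eq:A1a1}.
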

\begin{proof}
Using the fact that the scalar function $(0,\infty)\ni t \mapsto \frac{\mu t}{1+\mu t}\in (0,1)$ is increasing, the
assumption $\lambda_k\geq \underline{\lambda}>0$, for all $k\geq 1$, and \eqref{eq:def.alpha}, we find
\begin{align*}
\dfrac{1}{1-\sqrt{\dfrac{\mu\lambda_{j}}{1+\mu\lambda_{j}}}}\geq \dfrac{1}{1-\alpha},\qquad \forall j\geq 1.
\end{align*}
Hence, from Lemma \ref{lm:bbking}(a) and the assumption $\lambda_k\geq \underline{\lambda}$ with $k=1$ we obtain
$A_{k+1}\geq \underline{\lambda} \left(\dfrac{1}{1-\alpha}\right)^k$, for all $k\geq 1$, which is
clearly equivalent to $A_{k}\geq \underline{\lambda} \left(\dfrac{1}{1-\alpha}\right)^{k-1}$ for all $k\geq 2$.
To finish the proof of \eqref{eq:borwein}, note that the latter inequality holds trivially for $k=1$ (because $A_1=\lambda_1$ and
$\lambda_1\geq \underline{\lambda}$).
\end{proof}

\mgap

Next we present convergence rate results for Algorithm \ref{alg:main} under the assumption that
$\{\lambda_k\}$ is bounded away from zero.

\mgap

\begin{theorem}[{\bf Convergence rates for Algorithm \ref{alg:main} with $\{\lambda_k\}$ bounded below}]
 \label{th:main_comp03}
 Consider the sequences evolved by \emph{Algorithm \ref{alg:main}} and assume that 
 $\lambda_k\geq \underline{\lambda}>0$ for all $k\geq 1$. Let $x^*$ be the (unique) solution of \eqref{eq:co}, let $d_0$ be as in  \eqref{eq:def.d0} and let $\alpha\in (0,1)$ be as in \eqref{eq:def.alpha}.
 The following holds:
 \begin{itemize}
 \item[\emph{(a)}] For all $k\geq 1$,
 \begin{align*}
  & h(y^{k})-h(x^*)\leq \dfrac{d_0^2}{2\underline{\lambda}}(1-\alpha)^{k-1},\\[3mm]
  & \max\left\{\norm{x^*-y^k},\norm{x^*-x^k}\right\}\leq 
      \dfrac{d_0}{\sqrt{\mu\underline{\lambda}}}(1-\alpha)^{(k-1)/2}.
 \end{align*}
\item[\emph{(b)}] For all $k\geq 1$, 
\begin{align*}
%
\begin{cases}
&v^{k+1}\in \partial_{\varepsilon_{k+1}} f(y^{k+1})+\partial g(y^{k+1}),\\[3mm]
&\norm{v^{k+1}}\leq 
\left(\dfrac{1+\sigma\sqrt{1+\mu\underline{\lambda}}}{6^{-1/2}\mu^{1/2}\underline{\lambda}^{3/2}}\right)
d_0\,(1-\alpha)^{(k-1)/2},\\[6mm]
&\varepsilon_{k+1}\leq \left(\dfrac{3\sigma^2 d_0^2}{\mu \underline{\lambda}^2}\right) (1-\alpha)^{k-1}.
\end{cases}
\end{align*}

 \end{itemize}
\end{theorem}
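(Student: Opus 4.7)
\bigskip

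\noindent
\textbf{Proof plan.} The plan is to derive the theorem by combining the per-iteration bounds already proved in Theorem~\ref{th:main_comp} with the exponential lower bound on $A_k$ supplied by Corollary~\ref{cor:bounded}. Throughout, the assumption $\lambda_k\geq\underline{\lambda}>0$ is used both through $A_k$ (to get the decay factor $(1-\alpha)^{k-1}$) and through direct substitution in any surviving $\lambda_{k+1}$ coefficient (to remove it in favor of $\underline{\lambda}$).

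For part (a), I first apply Theorem~\ref{th:main_comp}(a) to get
\[
h(y^k)-h(x^*)\leq \dfrac{d_0^2}{2A_k}, \quad \norm{x^*-y^k}^2\leq \dfrac{d_0^2}{\mu A_k}, \quad \norm{x^*-x^k}^2\leq \dfrac{d_0^2}{1+\mu A_k}\leq \dfrac{d_0^2}{\mu A_k}.
\]
Then I invoke Corollary~\ref{cor:bounded} to replace $1/A_k$ by $(1-\alpha)^{k-1}/\underline{\lambda}$, which immediately yields the claimed linear rate for $h(y^k)-h(x^*)$ and, after taking square roots, the $(1-\alpha)^{(k-1)/2}$ rate for $\norm{x^*-y^k}$ and $\norm{x^*-x^k}$ (and hence for their maximum).

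For part (b), the inclusion $v^{k+1}\in\partial_{\varepsilon_{k+1}}f(y^{k+1})+\partial g(y^{k+1})$ is immediate from Theorem~\ref{th:main_comp}(b). For the $\varepsilon_{k+1}$ bound I simply combine $\varepsilon_{k+1}\leq (3\sigma^2/\lambda_{k+1})\,d_0^2/(\mu A_k)$ with $\lambda_{k+1}\geq\underline{\lambda}$ and Corollary~\ref{cor:bounded}. The slightly more delicate step is the bound on $\norm{v^{k+1}}$: starting from
\[
\norm{v^{k+1}}\leq \dfrac{1+\sigma\sqrt{1+\mu\lambda_{k+1}}}{6^{-1/2}\lambda_{k+1}}\cdot\dfrac{d_0}{\sqrt{\mu A_k}},
\]
I need to simplify the $\lambda_{k+1}$-dependent factor in the numerator so that only $\underline{\lambda}$ remains. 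This is the only nontrivial manipulation in the proof, and I would handle it by splitting
\[
\dfrac{1+\sigma\sqrt{1+\mu\lambda_{k+1}}}{\lambda_{k+1}}=\dfrac{1}{\lambda_{k+1}}+\sigma\sqrt{\dfrac{1}{\lambda_{k+1}^2}+\dfrac{\mu}{\lambda_{k+1}}},
\]
and then using $\lambda_{k+1}\geq\underline{\lambda}$ term by term to obtain the upper bound $(1+\sigma\sqrt{1+\mu\underline{\lambda}})/\underline{\lambda}$. Combining this with Corollary~\ref{cor:bounded} produces the stated linear rate.

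The routine algebra is minimal; the only place to be careful is the monotonicity argument that lets $\sqrt{1+\mu\lambda_{k+1}}$ in the numerator be replaced by $\sqrt{1+\mu\underline{\lambda}}$ despite $\underline{\lambda}$ being a lower (not upper) bound on $\lambda_{k+1}$. This works precisely because that square root appears \emph{divided} by $\lambda_{k+1}$, and the composite quantity $\sqrt{1+\mu\lambda}/\lambda$ is decreasing in $\lambda$, so substituting the lower bound $\underline{\lambda}$ for $\lambda_{k+1}$ yields an upper bound. Once this point is observed, everything else is direct substitution.
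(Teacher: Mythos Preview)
Your proposal is correct and follows essentially the same route as the paper: part (a) is obtained by plugging the lower bound $A_k\geq\underline{\lambda}(1-\alpha)^{-(k-1)}$ from Corollary~\ref{cor:bounded} into Theorem~\ref{th:main_comp}(a), and part (b) from Theorem~\ref{th:main_comp}(b) together with $\lambda_{k+1}\geq\underline{\lambda}$ and the same corollary. Your term-by-term splitting of $(1+\sigma\sqrt{1+\mu\lambda_{k+1}})/\lambda_{k+1}$ is just an explicit verification of what the paper states in one line, namely that $t\mapsto (1+\sigma\sqrt{1+\mu t})/t$ is nonincreasing on $(0,\infty)$; the two arguments are equivalent.
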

\begin{proof}
(a) This follows from Theorem \ref{th:main_comp}(a) and Corollary \ref{cor:bounded}.

\mgap

(b) The result follows from Theorem \ref{th:main_comp}(b), Corollary \ref{cor:bounded}, 
the assumption $\lambda_k\geq \underline{\lambda}$ 
and the fact that, for $t>0$, the scalar function $t\mapsto \frac{1+\sigma\sqrt{1+\mu t}}{t}$ is nonincreasing.
\end{proof}


\section{A (high-order) large-step A-HPE algorithm for strongly convex problems}
\label{sec:ls}

In this section, we also consider problem \eqref{eq:co}, i.e., $\min_{x\in \HH}\left\{h(x):=f(x)+g(x)\right\}$, where the same assumptions as in Section \ref{sec:alg} are assumed to hold on $h$, $f$ and $g$.

For solving \eqref{eq:co}, we propose and study the iteration-complexity of a variant (Algorithm \ref{alg:ls}) of the large-step A-HPE algorithm of Monteiro and Svaiter~\cite{mon.sva-acc.siam13}, with a high-order large-step condition specially tailored for strongly convex objectives . 
Applications of this general framework to high-order tensor methods
will be given in Section \ref{sec:second}. 
The main results on convergence rates for Algorithm \ref{alg:ls} are presented in Theorem \ref{th:main_comp02} below.

\mgap

%
\noindent
\fbox{
\begin{minipage}[h]{6.6 in}
\begin{algorithm}
\label{alg:ls}
{\bf A variant of the large-step A-HPE algorithm for (the strongly convex) problem \eqref{eq:co}}
\end{algorithm}
\begin{itemize}
\item [0)] 
Choose $x^0,y^0\in \HH$, $\sigma\in [0,1)$, $p\geq 2$ and $\theta>0$; let $A_0=0$ and set $k=0$.
\item [1)] Compute $\lambda_{k+1}>0$ and $(y^{k+1},v^{k+1},\varepsilon_{k+1})\in \HH\times \HH\times \R_{+}$ such that 
\begin{align}
\begin{aligned}
 \label{eq:alg_err2}
  &v^{k+1}\in \partial_{\varepsilon_{k+1}} f(y^{k+1})+ \partial g(y^{k+1}),\\[3mm]
  &\dfrac{\norm{\lambda_{k+1}v^{k+1}+y^{k+1}-\widetilde x^k}^2}{1+\lambda_{k+1}\,\mu}
  +2\lambda_{k+1}\varepsilon_{k+1}
  \leq \sigma^2\norm{y^{k+1}-\widetilde x^k}^2,\\[3mm]
  & \lambda_{k+1}\norm{y^{k+1}-\widetilde x^k}^{p-1}\geq \theta,
\end{aligned}
\end{align}
where
\begin{align}
\label{eq:alg_xtil2}
& \widetilde x^k =  \left(\dfrac{a_{k+1}-\mu A_k\lambda_{k+1}}{A_k+a_{k+1}}\right)x^k
 + \left(\dfrac{A_k+\mu A_k\lambda_{k+1}}{A_k+a_{k+1}}\right)y^k,\\[3mm]
 \label{eq:alg_a2}
& a_{k+1}=\dfrac{(1+2\mu A_k)\lambda_{k+1}+\sqrt{(1+2\mu A_k)^2\lambda_{k+1}^2
+4(1+\mu A_k)A_k\lambda_{k+1}}}{2}. 
\end{align}

\item[2)] 
 Let
  \begin{align}
  \label{eq:alg_A2}
  &A_{k+1} = A_k + a_{k+1},\\[2mm]
  \label{eq:alg_xne2}
  & x^{k+1} = \left(\dfrac{1+\mu A_k}{1+\mu A_{k+1}}\right)x^k + 
  \left(\dfrac{\mu a_{k+1}}{1+\mu A_{k+1}}\right) y^{k+1}
  - \left(\dfrac{a_{k+1}}{1+\mu A_{k+1}}\right)v^{k+1}.
  \end{align}
\item[3)] 
Set $k=k+1$ and go to step 1.
\end{itemize}
\noindent
\end{minipage}
} 
%
%
\mgap
\mgap

We now make a few remarks concerning Algorithm \ref{alg:ls}:

\begin{itemize}
\item[(i)] By deleting 
the third inequality in \eqref{eq:alg_err2} (the high-order large-step condition), we see that  Algorithm \ref{alg:ls} is a special instance of Algorithm \ref{alg:main}. As a consequence, all results proved in Section \ref{sec:alg} for Algorithm \ref{alg:main} also hold for Algorithm \ref{alg:ls}.
\item[(ii)] We mention that Algorithm \ref{alg:ls} is a generalization of Algorithm 1 in 
\cite{jordan-controlpreprint20} to strongly convex objectives. 
The authors of the latter work proved the global rates $\mathcal{O}\left(k^{-\frac{3p+1}{2}}\right)$, 
$\mathcal{O}\left(k^{-3p}\right)$ and $\mathcal{O}\left(k^{-\frac{3p+3}{2}}\right)$ 
for (in the notation of this paper)  function values $h(y^{k+1})-h(x^*)$ and residuals $\inf_{1\leq i\leq k+1}\,\norm{v^{i}}^2$ and $\inf_{1\leq i\leq k+1}\,\varepsilon_i$, respectively. (see \cite[Theorem 4.3]{jordan-controlpreprint20}.)
\end{itemize}

In what follows we will use remark (i) following Algorithm \ref{alg:ls} to apply the results proved for 
Algorithm \ref{alg:main}  in Section \ref{sec:alg} to Algorithm \ref{alg:ls}.

The next two lemmas will be used to prove Theorem \ref{th:main_comp02} below.

\begin{lemma}
 \label{lm:emicida}
Consider the sequences evolved by \emph{Algorithm \ref{alg:ls}} and let $d_0:=\norm{x^0-x^*}$, where $x^*$ is 
the (unique) solution of \eqref{eq:co}. Then, for all $k\geq 1$,
\begin{align}
 \label{eq:furry}
 \sum_{j=1}^k\,\dfrac{A_j}{\lambda_j^\frac{p+1}{p-1}}\leq \dfrac{d_0^2}{\theta^{\frac{2}{p-1}}(1-\sigma^2)}.
\end{align}
In particular, for all $k\geq 1$,
\begin{align}
 \label{eq:furry2}
 \lambda_k\geq 
C d_0^{-\frac{2(p-1)}{p+1}},\qquad 
 C:=\lambda_1^{\frac{p-1}{p+1}}\theta^{\frac{2}{p+1}}(1-\sigma^2)^{\frac{p-1}{p+1}}.
\end{align}
\end{lemma}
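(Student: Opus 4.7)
The proof will chain together three ingredients: the summable bound from Proposition \ref{pr:cambio}(b), the high-order large-step condition in \eqref{eq:alg_err2}, and the monotonicity $A_j \geq A_1 = \lambda_1$ observed in \eqref{eq:A1a1}.

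First, I would invoke Proposition \ref{pr:cambio}(b) (which applies to Algorithm \ref{alg:ls} by remark (i) after the algorithm), giving
\[
\sum_{j=1}^{k}\frac{A_j}{\lambda_j}\norm{y^j-\widetilde x^{\,j-1}}^2 \leq \frac{d_0^2}{1-\sigma^2}.
\]
Next, I would exploit the large-step inequality $\lambda_j\norm{y^j-\widetilde x^{\,j-1}}^{p-1}\geq \theta$ to obtain the lower bound
\[
\norm{y^j-\widetilde x^{\,j-1}}^2 \geq \theta^{2/(p-1)}\lambda_j^{-2/(p-1)},
\]
and substitute it into the displayed sum. Since $1+\frac{2}{p-1}=\frac{p+1}{p-1}$, this immediately produces \eqref{eq:furry}.

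For \eqref{eq:furry2}, I would simply drop all but the $j$-th term of the sum in \eqref{eq:furry}, yielding
\[
\frac{A_j}{\lambda_j^{(p+1)/(p-1)}} \leq \frac{d_0^2}{\theta^{2/(p-1)}(1-\sigma^2)}.
\]
Since $\{A_j\}$ is nondecreasing (by \eqref{eq:alg_A2} and $a_{j+1}\geq 0$) and $A_1=\lambda_1$ by \eqref{eq:A1a1}, we have $A_j\geq \lambda_1$ for all $j\geq 1$. Solving the above inequality for $\lambda_j$ and raising to the power $(p-1)/(p+1)$ gives
\[
\lambda_j \geq \lambda_1^{(p-1)/(p+1)}\,\theta^{2/(p+1)}\,(1-\sigma^2)^{(p-1)/(p+1)}\,d_0^{-2(p-1)/(p+1)},
\]
which is precisely the claimed inequality with $C$ as defined.

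There is no real obstacle here; the work is entirely algebraic bookkeeping once one recognizes that Proposition \ref{pr:cambio}(b) is the right starting point and that the large-step condition is designed exactly to convert the $\norm{y^j-\widetilde x^{j-1}}^2$ factor into a power of $\lambda_j$. The only small care required is tracking the exponents $\frac{2}{p-1}$, $\frac{p+1}{p-1}$, and $\frac{p-1}{p+1}$ consistently.
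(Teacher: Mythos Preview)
Your proof is correct and follows essentially the same route as the paper: both invoke Proposition~\ref{pr:cambio}(b) (i.e., \eqref{eq:sting3}), convert $\norm{y^j-\widetilde x^{\,j-1}}^2$ into a power of $\lambda_j$ via the large-step condition, and then use $A_j\geq A_1=\lambda_1$ to extract the lower bound on $\lambda_j$. The only cosmetic difference is that the paper writes the substitution as $\bigl(\lambda_j\norm{y^j-\widetilde x^{\,j-1}}^{p-1}\bigr)^{2/(p-1)}\geq \theta^{2/(p-1)}$ rather than first isolating $\norm{y^j-\widetilde x^{\,j-1}}^2$.
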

\begin{proof}
Using \eqref{eq:sting3}
 and third inequality in \eqref{eq:alg_err2}, we obtain
 \begin{align*}
  \left(\sum_{j=1}^k\,\dfrac{A_j}{\lambda_j^\frac{p+1}{p-1}}\right)\theta^{\frac{2}{p-1}}
  \leq 
 \sum_{j=1}^{k}\,
\dfrac{A_j}{\lambda_j^{\frac{p+1}{p-1}}}\left(\lambda_j \norm{y^j-\widetilde x^{\,j-1}}^{p-1}\right)^{\frac{2}{p-1}}
=\sum_{j=1}^{k}\,
\dfrac{A_j}{\lambda_j}\norm{y^j-\widetilde x^{\,j-1}}^2
\leq \dfrac{d_0^2}{1-\sigma^2},
 \end{align*}
 which yields \eqref{eq:furry}. To finish the proof of the lemma, note that \eqref{eq:furry2} follows directly fom \eqref{eq:furry} and the fact that $A_k\geq \lambda_1$ for all $k\geq 1$ (see \eqref{eq:alg_A} and \eqref{eq:A1a1}).
\end{proof}

\mgap

\begin{lemma}
 \label{lm:belchior}
For all $k\geq 0$,
\begin{align}
 \label{eq:belchior}
A_{k+1}\geq 
\lambda_1\left(1+ \dfrac{2\mu C}{d_0^{\frac{2(p-1)}{p+1}}}\, k^{\,\left(\frac{p-1}{p+1}\right)}\right)^k,
\end{align}
where $C>0$ is as in \eqref{eq:furry2}.
\end{lemma}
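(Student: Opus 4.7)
The plan is to combine Lemma \ref{lm:bbking}(b), which gives the product lower bound $A_{k+1}\geq \lambda_1\prod_{j=2}^{k+1}(1+2\mu\lambda_j)$, with the summability estimate \eqref{eq:furry} interpreted through a geometric mean. The key tool will be the standard consequence of Jensen's inequality that
\[
\prod_{j}(1+\alpha_j)\;\geq\;(1+\mathrm{GM}(\alpha_j))^k
\]
for positive $\alpha_j$ (where $\mathrm{GM}$ denotes the geometric mean over $k$ terms), which converts a geometric-mean lower bound on the $\lambda_j$'s into precisely the shape of \eqref{eq:belchior}.

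First I would dispatch $k=0$, for which \eqref{eq:belchior} reduces to $A_1\geq\lambda_1$, holding with equality by \eqref{eq:A1a1}. For $k\geq 1$, since $\{A_k\}$ is nondecreasing and $A_1=\lambda_1$, we have $A_j\geq\lambda_1$ for every $j\geq 1$. Plugging this into \eqref{eq:furry} (applied with $k+1$ in place of $k$) and dropping the $j=1$ summand gives
\[
\sum_{j=2}^{k+1}\lambda_j^{-\frac{p+1}{p-1}}\;\leq\;\frac{d_0^2}{\lambda_1\,\theta^{\frac{2}{p-1}}(1-\sigma^2)}.
\]
Applying the AM--GM inequality to the $k$ positive numbers $\lambda_j^{-(p+1)/(p-1)}$ ($j=2,\dots,k+1$) and writing $G=\bigl(\prod_{j=2}^{k+1}\lambda_j\bigr)^{1/k}$ yields
\[
G^{-\frac{p+1}{p-1}}\;\leq\;\frac{1}{k}\sum_{j=2}^{k+1}\lambda_j^{-\frac{p+1}{p-1}}\;\leq\;\frac{d_0^2}{k\,\lambda_1\,\theta^{\frac{2}{p-1}}(1-\sigma^2)}.
\]
Raising both sides to the power $-(p-1)/(p+1)$ and recognizing the resulting constant on the right as $C/d_0^{\frac{2(p-1)}{p+1}}$ (via the very definition of $C$ in \eqref{eq:furry2}, i.e.\ $C=\lambda_1^{(p-1)/(p+1)}\theta^{2/(p+1)}(1-\sigma^2)^{(p-1)/(p+1)}$) gives
\[
G\;\geq\; k^{\frac{p-1}{p+1}}\,\frac{C}{d_0^{\frac{2(p-1)}{p+1}}}.
\]

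Finally, I would apply Jensen's inequality to the convex function $h(x)=\log(1+2\mu e^x)$ (convexity is immediate from $h''(x)=2\mu e^x/(1+2\mu e^x)^2>0$) at the $k$ points $x_j=\log\lambda_j$, $j=2,\dots,k+1$, obtaining $\sum_{j=2}^{k+1}\log(1+2\mu\lambda_j)\geq k\log(1+2\mu G)$, and hence $\prod_{j=2}^{k+1}(1+2\mu\lambda_j)\geq (1+2\mu G)^k$. Combining this with Lemma \ref{lm:bbking}(b) and the above lower bound on $G$ produces \eqref{eq:belchior}. The main obstacle is the careful bookkeeping in the AM--GM step to verify that the accumulated constants collapse precisely into $C$; the product-to-geometric-mean step via Jensen is routine once $h$ is observed to be convex.
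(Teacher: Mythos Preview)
Your argument is correct and follows essentially the same route as the paper: both combine Lemma~\ref{lm:bbking}(b) with the summability bound \eqref{eq:furry} (after using $A_j\geq\lambda_1$) to lower-bound $\prod_{j=2}^{k+1}(1+2\mu\lambda_j)$. The only cosmetic difference is that the paper packages the product/constraint inequality as a separate auxiliary result (Lemma~\ref{lm:carinhoso}, proved via the substitution $\lambda_j=e^{t_j}$ and convexity/symmetry of $\log(1+e^t)$), whereas you inline the same convexity argument as AM--GM followed by Jensen for $h(x)=\log(1+2\mu e^x)$.
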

\begin{proof}
First note that from \eqref{eq:A1a1} we have $A_1=\lambda_1$, showing that \eqref{eq:belchior} trivially holds
for $k=0$. Assume now that $k>0$.
From Lemma \ref{lm:emicida} we know, in particular, that
\begin{align*}
%
 \sum_{j=2}^{k+1}\,\dfrac{A_j}{\lambda_j^\frac{p+1}{p-1}}\leq \dfrac{d_0^{2}}{\theta^\frac{2}{p-1}(1-\sigma^2)}.
\end{align*}
Since $A_j=A_{j-1}+a_j\geq A_{j-1}\geq \dots \geq A_1$, for all $j\geq 2$, and $A_1=\lambda_1$, we then obtain
%
\begin{align*}
 \sum_{j=2}^{k+1}\,\dfrac{1}{\lambda_j^\frac{p+1}{p-1}}\leq \dfrac{d_0^2}{\lambda_1 \theta^\frac{2}{p-1}(1-\sigma^2)}=:c.
\end{align*}
Now using Lemma \ref{lm:carinhoso} with $c>0$ as above, $q=\frac{p+1}{p-1}$ and 
$\lambda_j\leftarrow 2\mu\lambda_j$, we find
\begin{align*}
 \prod_{j=2}^{k+1}\left(1+2\mu\lambda_{j}\right)&\geq \left(1+\left(\dfrac{(2\mu)^{\frac{p+1}{p-1}}}{c}k
  \right)^{\frac{p-1}{p+1}}\right)^{k}\\[2mm]
     & =\left(1+\dfrac{2\mu}{c^{\frac{p-1}{p+1}}}k^{\frac{p-1}{p+1}}\right)^{k}\\[2mm]
     & = \left(1+\left(\dfrac{2\mu\lambda_1^{\frac{p-1}{p+1}}\theta^{\frac{2}{p+1}}(1-\sigma^2)^{\frac{p-1}{p+1}}}{d_0^{\frac{2(p-1)}{p+1}}}\right)k^{\frac{p-1}{p+1}}\right)^{k},
\end{align*}
which, in turn, combined with \eqref{eq:sting2} and the definition of $C$ in \eqref{eq:furry2} finishes the proof of the lemma.
\end{proof}

\mgap

Next is the main result on global convergence rates for Algorithm \ref{alg:ls}. As we mentioned before, it provides a global \emph{superlinear} $\mathcal{O}\left(k^{\,-k\left(\frac{p-1}{p+1}\right)}\right)$ convergence, where $p-1\geq 1$ is the power in the high-order large-step condition (third inequality in \eqref{eq:alg_err2}).

\mgap

\begin{theorem}[{\bf Convergence rates for Algorithm \ref{alg:ls}}]
 \label{th:main_comp02}
 Consider the sequences evolved by \emph{Algorithm \ref{alg:ls}}, let $x^*$ denote the (unique) solution of
 \eqref{eq:co} and let $C>0$ be as in \eqref{eq:furry2}.
 Then the following holds:
 \begin{itemize}
 %
\item[\emph{(a)}] For all $k\geq 0$,
 \begin{align*}
   &h(y^{k+1})-h(x^*)\leq \dfrac{d_0^2}{2\lambda_1\left(1+\frac{2\mu C}{d_0^{\frac{2(p-1)}{p+1}}}\,k^{\,\left(\frac{p-1}{p+1}\right)}\right)^{k}}
   =\mathcal{O}\left(\frac{1}{k^{k\left(\frac{p-1}{p+1}\right)}}\right),\\[2mm]
   &\max\left\{\norm{x^*-x^{k+1}}^2,\norm{x^*-y^{k+1}}^2\right\}\leq 
   \dfrac{d_0^2}
   {\mu \lambda_1\left(1+ \frac{2\mu C}{d_0^{\frac{2(p-1)}{p+1}}} k^{\,\left(\frac{p-1}{p+1}\right)}\right)^{k}}
   =\mathcal{O}\left(\frac{1}{k^{k\left(\frac{p-1}{p+1}\right)}}\right).
 \end{align*}
\item[\emph{(b)}] For all $k\geq 1$, 
\begin{align*}
\begin{cases}
&v^{k+1}\in \partial_{\varepsilon_{k+1}} f(y^{k+1})+\partial g(y^{k+1}),\\[3mm]
&\norm{v^{k+1}}^2\leq 
\left(1+\sigma\sqrt{1+\mu C d_0^{-\frac{2(p-1)}{p+1}}}\right)^2
\dfrac{6 d_0^{\frac{2(3p-1)}{p+1}}}
   {\mu C^2\lambda_1\left(1+ \frac{2\mu C}{d_0^{\frac{2(p-1)}{p+1}}} (k-1)^{\,\left(\frac{p-1}{p+1}\right)}\right)^{k-1}}
   =\mathcal{O}\left(\frac{1}{(k-1)^{(k-1)\left(\frac{p-1}{p+1}\right)}}\right),\\[6mm]
&\varepsilon_{k+1}\leq  
\dfrac{3\sigma^2 d_0^{\frac{4p}{p+1}}}
   {\mu C \lambda_1\left(1+ \frac{2\mu C}{d_0^{\frac{2(p-1)}{p+1}}} (k-1)^{\,\left(\frac{p-1}{p+1}\right)}\right)^{k-1}}
   =\mathcal{O}\left(\frac{1}{(k-1)^{(k-1)\left(\frac{p-1}{p+1}\right)}}\right).
\end{cases}
\end{align*}
 \end{itemize}
\end{theorem}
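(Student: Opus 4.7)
The plan is to derive Theorem \ref{th:main_comp02} as a direct consequence of Theorem \ref{th:main_comp} together with the two lemmas \ref{lm:emicida} and \ref{lm:belchior} proved just above it. The key starting observation, already made in remark (i) following Algorithm \ref{alg:ls}, is that Algorithm \ref{alg:ls} is a particular instance of Algorithm \ref{alg:main} (only the extra third inequality in \eqref{eq:alg_err2} is added), so every bound established in Section \ref{sec:alg} applies verbatim.

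For part (a), I would start from the three inequalities of Theorem \ref{th:main_comp}(a), namely $h(y^k)-h(x^*)\leq d_0^2/(2A_k)$, $\|x^*-y^k\|^2\leq d_0^2/(\mu A_k)$, and $\|x^*-x^k\|^2\leq d_0^2/(1+\mu A_k)$, applied with index $k+1$ in place of $k$. Using $1+\mu A_{k+1}\geq \mu A_{k+1}$, the last one can be weakened to the same form $d_0^2/(\mu A_{k+1})$ as the second, which is convenient for unifying the statement. Then I insert the lower bound $A_{k+1}\geq \lambda_1\bigl(1+\tfrac{2\mu C}{d_0^{2(p-1)/(p+1)}}\,k^{(p-1)/(p+1)}\bigr)^k$ from Lemma \ref{lm:belchior}. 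This directly yields the two displayed inequalities of item (a), and the $\mathcal{O}\!\bigl(k^{-k(p-1)/(p+1)}\bigr)$ asymptotics follow since the base of the power tends to infinity with $k$.

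For part (b), the inclusion $v^{k+1}\in \partial_{\varepsilon_{k+1}}f(y^{k+1})+\partial g(y^{k+1})$ is just the first line of \eqref{eq:alg_err2}. The quantitative bounds come from Theorem \ref{th:main_comp}(b), which states
\[
\|v^{k+1}\|^2\leq \Bigl(\tfrac{1+\sigma\sqrt{1+\mu\lambda_{k+1}}}{6^{-1/2}\lambda_{k+1}}\Bigr)^{2}\tfrac{d_0^2}{\mu A_k},\qquad
\varepsilon_{k+1}\leq \tfrac{3\sigma^2}{\lambda_{k+1}}\cdot\tfrac{d_0^2}{\mu A_k}.
\]
Here I need lower bounds for $\lambda_{k+1}$ and for $A_k$. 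The bound on $\lambda_{k+1}$ is Lemma \ref{lm:emicida}: $\lambda_{k+1}\geq C d_0^{-2(p-1)/(p+1)}$, and the bound on $A_k$ is again Lemma \ref{lm:belchior} (with $k-1$ in place of $k$). Combining, $1/\lambda_{k+1}\leq d_0^{2(p-1)/(p+1)}/C$ and, by the monotonicity of $t\mapsto (1+\sigma\sqrt{1+\mu t})/t$ on $(0,\infty)$, the numerator factor can also be replaced by $1+\sigma\sqrt{1+\mu C d_0^{-2(p-1)/(p+1)}}$. Substitution gives the two displayed bounds; the exponents of $d_0$ simplify to $2(3p-1)/(p+1)$ in the $\|v^{k+1}\|^2$ bound (arising from $4(p-1)/(p+1)+2$) and to $4p/(p+1)$ in the $\varepsilon_{k+1}$ bound (arising from $2(p-1)/(p+1)+2$).

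I do not expect any real obstacle, since everything is a bookkeeping exercise of assembling previously proved inequalities. The only point demanding some care is the cosmetic algebra with the exponents of $d_0$ and the use of the monotonicity of $t\mapsto (1+\sigma\sqrt{1+\mu t})/t$ to push the $\lambda_{k+1}$ appearing \emph{inside} the square root of the Theorem \ref{th:main_comp}(b) bound down to the explicit lower bound $C d_0^{-2(p-1)/(p+1)}$; everything else is direct substitution. The asymptotic $\mathcal{O}\!\bigl((k-1)^{-(k-1)(p-1)/(p+1)}\bigr)$ notation in (b) records the behavior of the geometric-like base-$k$ growth of $A_k$ coming from Lemma \ref{lm:belchior}, which dominates all polynomial prefactors in $d_0$ and the constants $C,\lambda_1,\mu,\sigma,\theta$.
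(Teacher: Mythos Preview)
Your proposal is correct and follows essentially the same approach as the paper: the paper's proof likewise reduces everything to Theorem \ref{th:main_comp} combined with Lemmas \ref{lm:emicida} and \ref{lm:belchior}, invoking the same monotonicity of $t\mapsto (1+\sigma\sqrt{1+\mu t})/t$ and the lower bound on $\lambda_k$ from \eqref{eq:furry2} for item (b). Your write-up is in fact more detailed than the paper's terse one-paragraph proof.
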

\begin{proof}
Both items follow from Theorem \ref{th:main_comp} and Lemmas \ref{lm:emicida} and \ref{lm:belchior}. To prove the inequalities in item (b), one also has to use
the fact that the scalar function $t\mapsto \frac{1+\sigma\sqrt{1+\mu t}}{t}$ is nonincreasing as well as the lower bound on
$\lambda_k$ given in \eqref{eq:furry2}.
\end{proof}


\section{Applications to accelerated high-order tensor methods for strongly convex objectives}
 \label{sec:second}
In this section, we consider the problem
\begin{align}
 \label{eq:co4}
  \min_{x\in \HH}\,\left\{h(x):=f(x)+g(x)\right\},
\end{align}
 where $f, g:\HH\to (-\infty,\infty]$ are proper, closed and convex functions, $\mbox{dom}\,h\neq \emptyset$, and
 $g$ is {\it $\mu$-strongly convex} on $\HH$ and $p\geq 2$ {\it times continuously differentiable}  
 on $\Omega\supseteq \mbox{Dom}\,(\partial f)$ 
with $D^p g(\cdot)$ being {\it $L_p$-Lipschitz continuous on $\Omega$}: $0<L_p<+\infty$ and
 \begin{align}
  \label{eq:mg}
    \norm{D^p g(x) - D^p g(y)}\leq L_p \norm{x-y},\qquad  \forall x,y\in \Omega.
 \end{align}

 Define
 \begin{align}
  g_{x,p}(y):=g(x)+\sum_{k=1}^p\,\dfrac{1}{k!} D^k g(x)[y-x]^k + \dfrac{M}{(p+1)!}\norm{y-x}^{p+1}
  ,\qquad (x,y)\in \Omega\times \HH,
 \end{align}
 where $M>0$ is such that $M\geq p L_p$.
 
 As observed by Nesterov in \cite{nes-imp.mp20}, the function $g_{x,p}(\cdot)$ is convex whenever $M\geq p L_p$ and, moreover,
  \begin{align}
  \label{eq:error_p}
  \norm{\nabla g(y) -  \nabla g_{x,p}(y)}\leq \dfrac{L_p+M}{p!}\norm{y-x}^p,\qquad \forall (x,y)\in \Omega\times \HH.
 \end{align}
 
 At each iteration of the (exact) Proximal-Tensor method for solving \eqref{eq:co4} one has to find
 $y\in \HH$ solving an inclusion of the form
 \begin{align}
  \label{eq:manu}
   0\in \lambda\Big(\partial f(y)+ \nabla g_{z,p}(y)\Big) + y - x,
 \end{align} 
 where $z=P_{\Omega}(x)$ and $\lambda>0$. Note also that \eqref{eq:manu} is equivalent to solving the convex problem
 \begin{align}
  \label{eq:manux}
  \min_{y\in \HH}\,\left\{f(y)+ g_{z,p}(y)+\dfrac{1}{2\lambda}\norm{y-x}^2\right\}.
 \end{align}

Next definition introduces a notion of \emph{relative-error} inexact solution for \eqref{eq:manu} (or, equivalently, \eqref{eq:manux}). It will be used in step 2 (see \eqref{eq:sigma11}) of Algorithm \ref{alg:second}, and can be motivated as follows.
Note that the inclusion \eqref{eq:manu} can be splitted as an inclusion-equation system:
\begin{align*}
\begin{cases}
 u\in \partial f(y),&\\[2mm]
 \lambda\Big(u+\nabla g_{z,p}(y)\Big) + y - x = 0.
\end{cases}
\end{align*}
Definition \ref{def:newton_sol} below relaxes the latter conditions by allowing errors in both the inclusion 
($u$ will belong to $\partial_\varepsilon f(y)$ instead of belonging to $\partial f(y)$) and the equation.

\begin{definition}
 \label{def:newton_sol}
  The triple $(y,u,\varepsilon)\in \HH\times \HH\times \R_+$ is a $\hat \sigma$-approximate \emph{Tensor solution} of \eqref{eq:manu} at $(x,\lambda)\in \HH\times \R_{++}$ if
  $\hat \sigma\geq 0$ and
  \begin{align}
   \label{eq:mbarros2}
   u\in \partial_\varepsilon f(y),\qquad 
   \dfrac{\left\|\lambda \Big(u+ \nabla g_{z,p}(y)\Big)+y-x\right\|^2}{1+\lambda\mu}+2\lambda\varepsilon
   \leq \hat\sigma^2\norm{y-x}^2,
  \end{align}
 where $z=P_{\Omega}(x)$.
\end{definition}

Note that if $\hat\sigma=0$ in \eqref{eq:mbarros2}, then it follows that
$\varepsilon=0$, $u\in \partial f(y)$ and $\lambda \Big(u+ \nabla g_{z,p}(y)\Big)+y-x=0$, which implies that
$y$ is the solution of \eqref{eq:manu}.
We also mention that if we set $\mu=0$ in Definition \ref{def:newton_sol} then we recover 
\cite[Definition 2.1]{zhang-preprint20} (see also \cite[Definition 1]{jordan-controlpreprint20}).

\mgap

Next proposition shows that $\hat\sigma$-approximate solutions of \eqref{eq:manu} provide relative-error approximate solutions in the sense of \eqref{eq:alg_err2}.

\begin{proposition}
 \label{pr:tensor_hat}
Let $(u,y,\varepsilon)$ be a $\hat\sigma$-approximate Tensor solution of \eqref{eq:manu} at
$(x,\lambda)\in \HH\times \R_{++}$ \emph{(}in the sense of \emph{Definition \ref{def:newton_sol}}\emph{)} and define
\begin{align}
 \label{eq:gulliver}
 v=u + \nabla g(y),\qquad 
 \sigma=\dfrac{\lambda (L_p+M)\norm{y-x}^{p-1}}{p!\sqrt{1+\lambda\mu}}+\hat \sigma.
\end{align}
Then,
\begin{align}
 \label{eq:gulliver02}
 v\in \partial_\varepsilon f(y)+\nabla g(y),
 \qquad \dfrac{\norm{\lambda v+y-x}^2}{1+\lambda\mu}+2\lambda\varepsilon\leq \sigma^2\norm{y-x}^2.
\end{align}
\end{proposition}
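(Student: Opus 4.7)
The inclusion in \eqref{eq:gulliver02} is immediate from the definition $v := u + \nabla g(y)$ together with $u \in \partial_\varepsilon f(y)$, so the substance is the norm estimate. My plan is to decompose
\begin{equation*}
\lambda v + y - x \;=\; \bigl[\lambda(u + \nabla g_{z,p}(y)) + y - x\bigr] \;+\; \lambda\bigl[\nabla g(y) - \nabla g_{z,p}(y)\bigr],
\end{equation*}
apply the triangle inequality, and control the two summands separately using \eqref{eq:mbarros2} and \eqref{eq:error_p}. Writing
$A := \norm{\lambda(u + \nabla g_{z,p}(y)) + y - x}/\sqrt{1+\lambda\mu}$
and
$B := \lambda\norm{\nabla g(y) - \nabla g_{z,p}(y)}/\sqrt{1+\lambda\mu}$,
the triangle inequality yields $\norm{\lambda v + y - x}/\sqrt{1+\lambda\mu} \leq A + B$.

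The estimate on $A$ is direct from \eqref{eq:mbarros2}: I would keep the stronger form $A^2 + 2\lambda\varepsilon \leq \hat\sigma^2\norm{y-x}^2$, which a fortiori gives $A \leq \hat\sigma\norm{y-x}$. For $B$, the tensor approximation bound \eqref{eq:error_p} applied at the expansion point $z$ gives $\norm{\nabla g(y) - \nabla g_{z,p}(y)} \leq (L_p+M)\norm{y-z}^p/p!$. The key geometric observation is that $y \in \Dom(\partial f) \subseteq \Omega$, so $P_\Omega(y) = y$; non-expansiveness of the metric projection then yields $\norm{y - z} = \norm{P_\Omega(y) - P_\Omega(x)} \leq \norm{y - x}$, and hence $B \leq \tilde B := \lambda(L_p+M)\norm{y-x}^p/(p!\sqrt{1+\lambda\mu})$. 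From the definition of $\sigma$ in \eqref{eq:gulliver} I would then record the useful identity $\sigma\norm{y-x} = \hat\sigma\norm{y-x} + \tilde B$.

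To finish, I would square the triangle bound and re-incorporate the $\varepsilon$-slack into the $A^2$ block rather than dropping it:
\begin{equation*}
(A+B)^2 + 2\lambda\varepsilon \;=\; (A^2 + 2\lambda\varepsilon) + 2AB + B^2 \;\leq\; \hat\sigma^2\norm{y-x}^2 + 2\hat\sigma\norm{y-x}\tilde B + \tilde B^2 \;=\; \sigma^2\norm{y-x}^2,
\end{equation*}
where the inner inequality uses $A \leq \hat\sigma\norm{y-x}$ together with $B \leq \tilde B$. The only mildly subtle point is this preservation of $2\lambda\varepsilon$: the naive squared triangle inequality would discard it, but pairing it with $A^2$ before squaring recovers the hypothesis of \eqref{eq:mbarros2} exactly. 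The apparent mismatch between $\norm{y-x}^{p-1}$ inside $\sigma$ and $\norm{y-x}^p$ in \eqref{eq:error_p} is purely cosmetic: $\sigma$ is multiplied by $\norm{y-x}$ on the right-hand side of \eqref{eq:gulliver02}, restoring the expected power $\norm{y-x}^p$.
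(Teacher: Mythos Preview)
Your proof is correct and follows essentially the same route as the paper: triangle inequality on $\lambda v + y - x$, the tensor error bound \eqref{eq:error_p} combined with $\norm{y-z}\leq\norm{y-x}$, and then recombining the $2\lambda\varepsilon$ term with $A^2$ to invoke \eqref{eq:mbarros2}. The only cosmetic difference is that the paper packages the last step via the scalar inequality $(a+b)^2+c\leq (b+\sqrt{a^2+c})^2$, whereas you expand $(A+B)^2+2\lambda\varepsilon$ directly; the two are algebraically equivalent. One tiny slip: from $u\in\partial_\varepsilon f(y)$ you only know $y\in\Dom(\partial_\varepsilon f)\subseteq\overline{\Dom(\partial f)}\subseteq\Omega$, not $y\in\Dom(\partial f)$, but the projection argument goes through unchanged.
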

\begin{proof}
Note that the inclusion in \eqref{eq:gulliver02} follows from the definition of $v$ in \eqref{eq:gulliver} and the inclusion in \eqref{eq:mbarros2}.
To prove the inequality in \eqref{eq:gulliver02}, note that from the definition of $v$ in \eqref{eq:gulliver}, the triangle inequality and property \eqref{eq:error_p}, we find
\begin{align*}
%
\nonumber
\norm{\lambda v+y-x}^2&=\norm{\lambda \left(u+\nabla g_{z,p}(y)\right)+y-x+\lambda \big(\nabla g(y) - \nabla g_{z,p}(y)\big)}^2\\
\nonumber
 & \leq \Big(\norm{\lambda \left(u+\nabla g_{z,p}(y)\right)+y-x}+\lambda\norm{\nabla g(y) - \nabla g_{z,p}(y)}\Big)^2\\
 \nonumber
 &\leq \left(\norm{\lambda \left(u+\nabla g_{z,p}(y)\right)+y-x}+ \frac{\lambda (L_p+M)}{p!} \norm{y-z}^p\right)^2\\
 &\leq \left(\norm{\lambda \left(u+\nabla g_{z,p}(y)\right)+y-x}+ \frac{\lambda (L_p+M)}{p!} \norm{y-x}^p\right)^2,
\end{align*}
where in the last inequality we also used the fact that $\norm{y-z}\leq \norm{y-x}$.
(because $y\in \mbox{Dom}(\partial_\varepsilon f)\subset \overline{\mbox{Dom}(\partial f)}\subset \Omega$ and 
$z=P_\Omega(x)$.)

Hence,
\begin{align*}
\dfrac{\norm{\lambda v+y-x}^2}{1+\lambda\mu}+2\lambda\varepsilon
    &\leq \left(\dfrac{\norm{\lambda \left(u+\nabla g_{z,p}(y)\right)+y-x}}{\sqrt{1+\lambda\mu}}+ 
     \frac{\lambda (L_p+M)}{p!\sqrt{1+\lambda\mu}} \norm{y-x}^p\right)^2+2\lambda\varepsilon.
\end{align*}
Using now the elementary inequality $(a+b)^2+c\leq \left(b+\sqrt{a^2+c}\right)^2$ 
with 
$a=\frac{\norm{\lambda \left(u+\nabla g_{z,p}(y)\right) +y-x}}{\sqrt{1+\lambda\mu}}$,
$b=\lambda (L_p+M)\norm{y-x}^p / (p!\sqrt{1+\lambda\mu})$ 
 and $c=2\lambda\varepsilon$, we find
\begin{align*}
\dfrac{\norm{\lambda v+y-x}^2}{1+\lambda\mu}+2\lambda\varepsilon&\leq
\left(\frac{\lambda (L_p+M)}{p!\sqrt{1+\lambda\mu}} \norm{y-x}^p+
\sqrt{\dfrac{\norm{\lambda \left(u+\nabla g_{z,p}(y)\right)+y-x}^2}{1+\lambda\mu}+2\lambda\varepsilon}\right)^2\\
&\leq \left(\frac{\lambda (L_p+M)}{p!\sqrt{1+\lambda\mu}} \norm{y-x}^p+\hat \sigma\norm{y-x}\right)^2\\
&=\left(\frac{\lambda (L_p+M)}{p!\sqrt{1+\lambda\mu}} \norm{y-x}^{p-1}+\hat \sigma\right)^2\norm{y-x}^2\\
&=\sigma^2\norm{y-x}^2,
\end{align*}
where in the second inequality we used the inequality in \eqref{eq:mbarros2} and in the second identity we used the second
equality \eqref{eq:gulliver}.
\end{proof}

\mgap

Next we present our $p$-th order inexact (relative-error) accelerated tensor algorithm for solving \eqref{eq:co4}.

\mgap
\mgap

%
\noindent
\fbox{
\begin{minipage}[h]{6.6 in}
\begin{algorithm}
\label{alg:second}
{\bf An accelerated inexact high-order tensor method for solving \eqref{eq:co4}}
\end{algorithm}
\begin{itemize}
\item [0)] 
Choose $x^0,y^0\in \HH$ and $p\geq 2$, $\hat \sigma\geq 0$, $0<\sigma_\ell<\sigma_u<1$ such that
\begin{align}
 \label{eq:sigma10}
  \sigma:=\sigma_u+\hat\sigma<1,\qquad \sigma_\ell(1+\hat\sigma)^{p-1}<\sigma_u(1-\hat\sigma)^{p-1};
\end{align}
let $A_0=0$ and set $k=0$.
%
%
\item [1)] Compute $\lambda_{k+1}>0$ and a $\hat\sigma$-approximate Tensor solution $(u^{k+1},y^{k+1},\varepsilon_{k+1})$ (in the sense of Definition \ref{def:newton_sol}) of \eqref{eq:manu} at $(\widetilde x^k,\lambda_{k+1})$ satisfying
\begin{align}
 \label{eq:sigma11}
\dfrac{p!\,\sigma_\ell}{L_p+M}\leq  
 \lambda_{k+1}\norm{y^{k+1}-\widetilde x^k}^{p-1}\leq 
\dfrac{p!\,\sigma_u\sqrt{1+\lambda_{k+1}\mu}}{L_p+M},
\end{align}
where
\begin{align}
\label{eq:alg_xtil5}
& \widetilde x^k =  \left(\dfrac{a_{k+1}-\mu A_k\lambda_{k+1}}{A_k+a_{k+1}}\right)x^k
 + \left(\dfrac{A_k+\mu A_k\lambda_{k+1}}{A_k+a_{k+1}}\right)y^k,\\[3mm]
 \label{eq:alg_a5}
& a_{k+1}=\dfrac{(1+2\mu A_k)\lambda_{k+1}+\sqrt{(1+2\mu A_k)^2\lambda_{k+1}^2
+4(1+\mu A_k)A_k\lambda_{k+1}}}{2}. 
\end{align}

\item[2)] 
 Let
  \begin{align}
  \label{eq:alg_A5}
  &A_{k+1} = A_k + a_{k+1},\\[2mm]
  \label{eq:alg_nv}
  &v^{k+1}=u^{k+1}-\nabla g_{z^k,p}(y^{k+1})+\nabla g(y^{k+1}),\quad z^k=P_{\Omega}(\widetilde x^k),\\[2mm]
  \label{eq:alg_nv2}
  & x^{k+1} = \left(\dfrac{1+\mu A_k}{1+\mu A_{k+1}}\right)x^k + 
  \left(\dfrac{\mu a_{k+1}}{1+\mu A_{k+1}}\right) y^{k+1}-\left(\dfrac{a_{k+1}}{1+\mu A_{k+1}}\right)v^{k+1}.
  \end{align}
\item[3)] 
Set $k=k+1$ and go to step 1.
\end{itemize}
\noindent
\end{minipage}
} 
%
%
\mgap
\mgap

We now make two remarks concerning Algorithm \ref{alg:second}:

\begin{itemize}
\item[(i)] Algorithm \ref{alg:second} is a generalization of \cite[Algorithm 3]{jordan-controlpreprint20} for strongly convex problems. 
The latter algorithm, which can be applied to \eqref{eq:co4} with
$f\equiv 0$, has the global convergence rates
$\mathcal{O}\left(k^{-\frac{3p+1}{2}}\right)$ and 
$\mathcal{O}\left(k^{-3p}\right)$ for (in the notation of this paper) $g(y^k)-g(x^*)$
and $\inf_{1\leq i\leq k}\,\norm{\nabla g(y^i)}^2$, respectively (see \cite[Theorem 4.13]{jordan-controlpreprint20}).

In contrast to this, here we obtained, see Theorem \ref{th:main_tensor}, the fast global 
$\mathcal{O}\left(k^{\,-k\left(\frac{p-1}{p+1}\right)}\right)$ convergence rate. 
\item[(ii)] We also mention that a $\hat\sigma$-approximate Tensor solution satisfying \eqref{eq:sigma11} can be computed using bisection schemes (see \cite{alv.mon.sva-sqp.pre14,zhang-preprint20,mon.sva-acc.siam13}).
More precisely, by defining the curve
\begin{align*}
 \tau(\lambda) = \frac{a_{k+1}(\lambda)-\mu A_k \lambda}{A_k + a_{k+1}(\lambda)},\quad \lambda>0,
\end{align*}
where
\begin{align*}
 a_{k+1}(\lambda) := \dfrac{(1+2\mu A_k)\lambda + \sqrt{(1+2\mu A_k)^2\lambda^2
+4(1+\mu A_k)A_k\lambda}}{2},
\end{align*}
we find that $\widetilde x^k$ as in (67) can be written as
\begin{align*}
\widetilde x^k = y^k + \tau_{k+1}(x^k-y^k),
\end{align*}
where $\tau_{k+1}:=\tau(\lambda_{k+1})$.
Moreover, it is not difficult to check that  $\tau(\cdot)$ is (smooth) increasing and that $\dot{\tau}(\lambda)$ satisfies
\[
 \dot{\tau}(\lambda)\leq \frac{1}{\lambda}\qquad \forall \lambda>0.
\]
As a consequence of the above inequality, one can reproduce Lemma 7.13 as in~\cite[p. 1121]{mon.sva-acc.siam13} and pose the ``Line search problem'' as in~\cite[p. 1117]{mon.sva-acc.siam13} (with $\lambda\norm{y_\lambda - x(\lambda)}$ replaced by $\lambda\norm{y_\lambda - x(\lambda)}^{p-1}$).
The general search procedures studied in references~\cite[Section 4]{alv.mon.sva-sqp.pre14} and~\cite{zhang-preprint20} would also be helpful. The complexity of the 
bracketing/bisection procedure depends on a logarithm of the inverse of the precision (see, e.g.,~\cite[Theorem 7.16]{mon.sva-acc.siam13}).

\end{itemize}

\mgap

\begin{proposition}
 \label{pr:tensor_special}
  \emph{Algorithm \ref{alg:second}} is a special instance of \emph{Algorithm \ref{alg:ls}} for solving \eqref{eq:co4}, where
  \begin{align}
   \label{eq:theta_tensor}
    \theta:=\dfrac{p!\sigma_\ell}{L_p+M}.
  \end{align}
\end{proposition}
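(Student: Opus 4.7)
My strategy is to verify directly that every requirement imposed by step 1 and step 2 of Algorithm \ref{alg:ls}, with $\sigma:=\hat\sigma+\sigma_u$ and $\theta:=\frac{p!\sigma_\ell}{L_p+M}$, is satisfied by the quantities produced by Algorithm \ref{alg:second}. Note first that \eqref{eq:sigma10} guarantees $\sigma\in[0,1)$, as required by step 0 of Algorithm \ref{alg:ls}. The update formulas \eqref{eq:alg_xtil5}, \eqref{eq:alg_a5}, \eqref{eq:alg_A5} and \eqref{eq:alg_nv2} are identical, line for line, to their counterparts \eqref{eq:alg_xtil2}, \eqref{eq:alg_a2}, \eqref{eq:alg_A2} and \eqref{eq:alg_xne2}, so no work is needed there; the substantive content lies in verifying the inclusion-inequality system and the large-step condition in \eqref{eq:alg_err2}.

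For the inclusion and the relative-error inequality in \eqref{eq:alg_err2}, the key tool is Proposition \ref{pr:tensor_hat}. Since $(u^{k+1},y^{k+1},\varepsilon_{k+1})$ is a $\hat\sigma$-approximate Tensor solution of \eqref{eq:manu} at $(\widetilde x^k,\lambda_{k+1})$, Proposition \ref{pr:tensor_hat} furnishes a vector $v^{k+1}$ (consistent with \eqref{eq:alg_nv}) satisfying $v^{k+1}\in\partial_{\varepsilon_{k+1}}f(y^{k+1})+\partial g(y^{k+1})$ together with
\begin{equation*}
\frac{\|\lambda_{k+1}v^{k+1}+y^{k+1}-\widetilde x^k\|^2}{1+\lambda_{k+1}\mu}+2\lambda_{k+1}\varepsilon_{k+1}\leq \widetilde\sigma_{k+1}^{\,2}\|y^{k+1}-\widetilde x^k\|^2,
\end{equation*}
where the data-dependent tolerance is $\widetilde\sigma_{k+1}:=\hat\sigma+\frac{\lambda_{k+1}(L_p+M)\|y^{k+1}-\widetilde x^k\|^{p-1}}{p!\sqrt{1+\lambda_{k+1}\mu}}$. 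I would then invoke the upper bound in \eqref{eq:sigma11}, which rearranges to $\frac{\lambda_{k+1}(L_p+M)\|y^{k+1}-\widetilde x^k\|^{p-1}}{p!\sqrt{1+\lambda_{k+1}\mu}}\leq\sigma_u$, so that $\widetilde\sigma_{k+1}\leq\hat\sigma+\sigma_u=\sigma<1$. Hence the HPE-type inequality holds with the prescribed constant $\sigma$. For the large-step condition, the lower bound in \eqref{eq:sigma11} gives immediately $\lambda_{k+1}\|y^{k+1}-\widetilde x^k\|^{p-1}\geq\frac{p!\sigma_\ell}{L_p+M}=\theta$, which is the third inequality in \eqref{eq:alg_err2}.

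The only mild subtlety is that the Tensor-solution framework yields an \emph{effective} tolerance $\widetilde\sigma_{k+1}$ that depends on the still-unknown quantities $\lambda_{k+1}$ and $\|y^{k+1}-\widetilde x^k\|$, rather than a predetermined constant; a priori there is no reason the relative-error condition should hold with a uniform $\sigma<1$. Step \eqref{eq:sigma11} of Algorithm \ref{alg:second} is engineered precisely to provide a uniform two-sided control: the upper bound caps $\widetilde\sigma_{k+1}$ at $\sigma<1$ independently of $k$, while the lower bound simultaneously enforces the large-step constant $\theta$. Once this double-sided mechanism is recognized, the proof collapses to direct substitution.
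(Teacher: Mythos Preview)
Your proposal is correct and follows essentially the same approach as the paper: both argue that the update formulas match line for line, invoke Proposition \ref{pr:tensor_hat} together with the upper bound in \eqref{eq:sigma11} to obtain the inclusion and the relative-error inequality in \eqref{eq:alg_err2} with the uniform constant $\sigma=\hat\sigma+\sigma_u$, and use the lower bound in \eqref{eq:sigma11} for the large-step condition. Your write-up is somewhat more explicit about the role of the iteration-dependent tolerance $\widetilde\sigma_{k+1}$ and how \eqref{eq:sigma11} caps it, but the logical skeleton is identical.
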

\begin{proof}
It follows from the definitions of Algorithms \ref{alg:ls} and \ref{alg:second} that we only have to prove that \eqref{eq:alg_err2} holds.
Note that the inclusion and the first inequality in \eqref{eq:alg_err2} follow from step 1 of Algorithm \ref{alg:second} 
-- the fact that $(u^{k+1},y^{k+1},\varepsilon_{k+1})$ is a $\hat\sigma$-approximate Tensor solution of \eqref{eq:manu}--, the second
inequality in \eqref{eq:sigma11}, the definition of $\sigma$ in \eqref{eq:sigma10} and
Proposition \ref{pr:tensor_hat}.
To finish the proof of the proposition, note that the last inequality in \eqref{eq:alg_err2} (the large-step condition) is a direct consequence of the first inequality in \eqref{eq:sigma11} and \eqref{eq:theta_tensor}.
\end{proof}

\mgap

Next theorem states the fast global $\mathcal{O}\left(k^{\,-k\left(\frac{p-1}{p+1}\right)}\right)$ convergence rate for Algorithm \ref{alg:second}.

\mgap

\begin{theorem}[{\bf Convergence rates for Algorithm \ref{alg:second}}]
 \label{th:main_tensor}
 Consider the sequences generated by \emph{Algorithm \ref{alg:second}}, let $\theta>0$ be as in \eqref{eq:theta_tensor} and let $C>0$ be as in \eqref{eq:furry2}, where $d_0:=\norm{x^0-x^*}$ and $x^*$ is the (unique) solution of
 \eqref{eq:co4}.
 
 Then all the conclusions of \emph{Theorem \ref{th:main_comp02}} hold.
\end{theorem}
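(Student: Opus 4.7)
The plan is to argue that Theorem \ref{th:main_tensor} is an essentially immediate consequence of the two building blocks already established: Proposition \ref{pr:tensor_special}, which identifies Algorithm \ref{alg:second} as a special instance of Algorithm \ref{alg:ls}, and Theorem \ref{th:main_comp02}, which provides the desired convergence rates for Algorithm \ref{alg:ls}. No new analytic work is needed; one only has to verify that the hypotheses of Algorithm \ref{alg:ls} are met and then read off the conclusions.

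First, I would check that Algorithm \ref{alg:second} meets the requirements of Algorithm \ref{alg:ls}. The parameter $\sigma=\sigma_u+\hat\sigma$ lies in $[0,1)$ by the first inequality of \eqref{eq:sigma10}, and the large-step parameter $\theta=p!\sigma_\ell/(L_p+M)>0$ of \eqref{eq:theta_tensor} is well defined. Proposition \ref{pr:tensor_special} then asserts that, with these values of $\sigma$ and $\theta$, the iterates $(y^{k+1},v^{k+1},\varepsilon_{k+1})$ and stepsize $\lambda_{k+1}$ produced by Algorithm \ref{alg:second} satisfy all three conditions of \eqref{eq:alg_err2}. Internally, that proposition leverages Proposition \ref{pr:tensor_hat} to convert the $\hat\sigma$-approximate Tensor solution of \eqref{eq:manu} into a relative-error approximate solution of the proximal subproblem for $h=f+g$; the upper bound in \eqref{eq:sigma11} is what controls the constant $\sigma$, while the lower bound in \eqref{eq:sigma11} together with \eqref{eq:theta_tensor} delivers the high-order large-step condition.

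Once this identification is in hand, the conclusions of Theorem \ref{th:main_comp02} apply verbatim, with the constant $C>0$ computed via \eqref{eq:furry2} using the specific $\theta=p!\sigma_\ell/(L_p+M)$. This yields the superlinear $\mathcal{O}\!\left(k^{-k(p-1)/(p+1)}\right)$ rate for the function-value gap $h(y^{k+1})-h(x^*)$ and the iterate distances $\norm{x^*-x^{k+1}}^2$, $\norm{x^*-y^{k+1}}^2$ in part (a), together with the matching rates for the residuals $\norm{v^{k+1}}^2$ and $\varepsilon_{k+1}$ in part (b). There is no genuine obstacle here, since all the hard analytic steps (the potential/estimate argument of Proposition \ref{pr:cambio}, the growth rate of $\{A_k\}$ under the high-order large-step condition in Lemmas \ref{lm:emicida} and \ref{lm:belchior}, and the tensor-to-HPE reduction in Proposition \ref{pr:tensor_hat}) have already been done; the proof is a one-line deduction invoking Proposition \ref{pr:tensor_special} and then Theorem \ref{th:main_comp02}.
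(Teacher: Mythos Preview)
Your proposal is correct and matches the paper's own proof, which is literally the one-line deduction ``The proof follows from Proposition \ref{pr:tensor_special} and Theorem \ref{th:main_comp02}.'' Your additional commentary on why the hypotheses of Algorithm \ref{alg:ls} are met is accurate but more detailed than what the paper provides.
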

\begin{proof}
The proof follows from Proposition \ref{pr:tensor_special} and Theorem \ref{th:main_comp02}.
\end{proof}

\mgap

\noindent
{\bf Remarks.} We now make some remarks concerning Algorithm \ref{alg:second}:
\begin{itemize}
\item[(i)] Note that if $p=2$ in \eqref{eq:mg}, then it follows that \emph{Algorithm \ref{alg:second}} reduces to 
an (accelerated) inexact proximal-Newton-type algorithm with $\mathcal{O}\left(k^{-k/3}\right)$ global convergence rate 
(see Theorems \ref{th:main_tensor} and \ref{th:main_comp02}).
\item[(ii)] In \cite{gra.nes-acc.siam19}, an accelerated regularized Newton method was proposed for solving \eqref{eq:co4} 
with $p=2$ and $g$ being $\sigma_q$-uniformly convex. In the notation of this paper, Algorithm 1 of \cite{gra.nes-acc.siam19} (see Theorem 3.3) has global linear
\[
 \mathcal{O}\left(\dfrac{L_2 d_0^3}{\left(1+\left(\frac{\mu}{L_2}\right)^{1/3}\right)^{2(k-1)}}\right)
\]
convergence rate for function values.
\item[(iii)] In \cite{gasnikov-optimal.pmlr19}, problem \eqref{eq:co4} was considered with $f\equiv 0$ and $g$ assumed to be 
$\sigma_q$-uniformly convex. The complexity of an optimal tensor method with restart 
(\cite[Algorithm 3]{gasnikov-optimal.pmlr19}) to compute $x$ satisfying $g(x)-g(x^*)\leq \varepsilon$ is 
\begin{align*}
&\mathcal{O}\left(\left(\dfrac{L_p}{\sigma_{p+1}}\right)^{\frac{2}{3p+1}}\log_2\left(\frac{\Delta_0}{\varepsilon}\right)\right), 
\;q = p+1; \quad\\[2mm] 
&\mathcal{O}\left(\left(\dfrac{L_p\left(\Delta_0\right)^{p+1-q}{q}}{\sigma_q^{\frac{p+1}{q}}}\right)^{\frac{2}{3p+1}}
+ \log_2\left(\dfrac{\Delta_0}{\varepsilon}\right)
\right),\; q<p+1,
\end{align*}
where $\Delta_0\geq g(x^0)-g(x^*)$. We mention that the upper-bound $\Delta_0$ on $g(x^0)-g(x^*)$ is assumed to be known in the implementation of \cite[Algorithm 3]{gasnikov-optimal.pmlr19}.
\item[(iv)] In \cite{dvurechensky-near.preprint19}, a near-optimal algorithm for solving \eqref{eq:co4} with $f\equiv 0$ and $\mu=0$ (i.e., with $g$ convex but not strongly convex) was proposed and studied. The iteration-complexity of Algorithm 2 in
\cite{dvurechensky-near.preprint19} to find $x$ satisfying 
$\norm{\nabla g(x)}\leq \varepsilon$ (see \cite[Theorem 2]{dvurechensky-near.preprint19}) is
\[
 \mathcal{O}\left(\dfrac{L_p^{\frac{2}{3p+1}}}{\varepsilon^{\frac{2(p+1)}{3p+1}}}
 \Delta_0^{\frac{2p}{3p+1}}
 +\log_2\left(\dfrac{2^{\frac{4p-3}{p+1}}\Delta_0\left(pL_p\right)^{\frac{1}{p}}(p+1)!}
 {\varepsilon^{\frac{p}{p+1}}}\right),
 \right)
\]
where $\Delta_0\geq g(x^0)-g(x^*)$. Analogously to \cite{gasnikov-optimal.pmlr19}, the upper-bound $\Delta_0$ is assumed to be known while running \cite[Algorithm 2]{dvurechensky-near.preprint19}.
\item[(v)]
In \cite{gra.nes-ten.oms20}, tensor methods for solving \eqref{eq:co4} with $f\equiv 0$ and $g$ being $p$-times continuously differentiable with $\nu$-H\"older continuous $p$th derivatives were proposed. The iteration-complexity is 
$\mathcal{O}\left(\varepsilon^{-1/(p+\nu-1)}\right)$ and 
$\mathcal{O}\left(\varepsilon^{-(p+\nu)/[(p+\nu-1)(p+\nu+1)]}\right)$ for the non accelerated and accelerated methods, respectively, to find $x$ such that $\norm{\nabla g(x)}\leq \varepsilon$.
\item[(vi)]
In \cite{doi.nes-min.jota21}, a regularized Newton method with linear convergence rate in function values for 
minimizing uniformly convex functions with $\nu$-H\"older continuous Hessian was proposed and studied. 
The main result (\cite[Theorem 4.1]{doi.nes-min.jota21}) gives that the rate is
linear:
\[
 F(x_{k+1})-F^*\leq \left(1-\min\left\{
 \dfrac{(2+\nu)\left((1+\nu)(2+\nu)\right)^{1/(1+\nu)}\left(\gamma_f(\nu)\right)^\frac{1}{1+\nu}}
 {(1+\nu)6^{3/2}\cdot 2^{1/2}\cdot (8+\nu)^{(1-\nu)/(2+2\nu)}}
 ,\frac{1}{2}
 \right\}\right)\left(F(x_k)-F^*\right),
\] 
where $F$ is the objective function and $F^*$ denotes the optimal value.
\end{itemize}

\section{Applications to first-order methods for strongly convex problems}
 \label{sec:first}
Consider the convex optimization problem
\begin{align}
 \label{eq:co2}
  \min_{x\in \HH}\,\left\{h(x):=f(x)+g(x)\right\},
\end{align}
 where $f, g:\HH\to (-\infty,\infty]$ are proper, closed and convex functions, $\mbox{dom}\,h\neq \emptyset$, and, additionally,
 $g$ is \emph{$\mu$-strongly convex} on $\HH$ and {\it differentiable} on $\Omega\supseteq \mbox{dom}\,f$ 
with $\nabla g$ being $L$\emph{-Lipschitz continuous on $\Omega$}

 An iteration of the proximal-gradient (forward-backward) method for solving \eqref{eq:co2} can be written as follows:
\begin{align}
 \label{eq:ite.fb}
 y = (\lambda \partial f+I)^{-1} \left(x-\lambda \nabla g(z)\right),
\end{align}
where $z=P_{\Omega}(x)$ and $\lambda>0$. Using the definition of $(\lambda \partial f+I)^{-1}$, it is easy to see that
\eqref{eq:ite.fb} is equivalent to solving the inclusion
\begin{align}
 \label{eq:vhalen}
  0\in \lambda \Big(\partial f(y)+\nabla g(z)\Big) + y - x.
\end{align}

Next we define a notion of approximate solution for \eqref{eq:vhalen} within a \emph{relative-error} criterion.

\begin{definition}
 \label{def:gradient_sol}
  The triple $(y,u,\varepsilon)\in \HH\times \HH\times \R_+$ is a $\hat \sigma$-approximate 
  Proximal-Gradient (PG) solution of \eqref{eq:vhalen} at $(x,\lambda)\in \HH\times \R_{++}$ if
  $\hat \sigma\geq 0$ and
  \begin{align}
   \label{eq:mbarros4}
   u\in \partial_\varepsilon f(y),\qquad 
   \dfrac{\norm{\lambda \left(u+\nabla g(z)\right)+y-x}^2}{1+\lambda\mu}+2\lambda\varepsilon
   \leq \hat\sigma^2\norm{y-x}^2,
  \end{align}
 where $z=P_{\Omega}(x)$.
 We also write 
 \begin{align*}
   (y,u,\varepsilon) \approx (\lambda \partial f+I)^{-1} \left(x-\lambda \nabla g(z)\right)
 \end{align*}
 to mean that $(y,u,\varepsilon)$ is a $\hat \sigma$-approximate PG solution of
 \eqref{eq:vhalen} at $(x,\lambda)$.
\end{definition}

Note that if $\hat\sigma=0$ in \eqref{eq:mbarros4}, then it follows that $\varepsilon=0$, $u\in \partial f(y)$ and
$\lambda\left[u+\nabla g(z)\right]+y-x=0$, which implies that $y$ is the (exact) solution of \eqref{eq:vhalen}. In particular, in this case, $y$ satisfies \eqref{eq:ite.fb}.

\mgap

\begin{proposition}
 \label{pr:vhalen}
Let $(u,y,\varepsilon)$ be a $\hat\sigma$-approximate PG solution of \eqref{eq:vhalen} at 
$(x,\lambda)\in \HH\times \R_{++}$ as in \emph{Definition \ref{def:gradient_sol}} and define
\begin{align}
 \label{eq:vhalen2}
 v=u +\nabla g(y),\qquad \sigma=\dfrac{\lambda L}{\sqrt{1+\lambda\mu}}+\hat \sigma.
\end{align}
Then,
\begin{align}
 \label{eq:vhalen3}
 v\in \partial_\varepsilon f(y)+\nabla g(y),
 \qquad \dfrac{\norm{\lambda v+y-x}^2}{1+\lambda\mu}+2\lambda\varepsilon\leq \sigma^2\norm{y-x}^2.
\end{align}
\end{proposition}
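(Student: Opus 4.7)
The plan is to mirror the proof of Proposition \ref{pr:tensor_hat} almost line-by-line, replacing the tensor-approximation bound \eqref{eq:error_p} with the $L$-Lipschitz continuity of $\nabla g$ on $\Omega$. The inclusion in \eqref{eq:vhalen3} is immediate: by the definition $v=u+\nabla g(y)$ in \eqref{eq:vhalen2} and the fact that $u\in\partial_\varepsilon f(y)$ from \eqref{eq:mbarros4}, we get $v\in\partial_\varepsilon f(y)+\nabla g(y)$ for free.

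For the quantitative inequality, I would first add and subtract $\lambda\nabla g(z)$ to rewrite
\[
\lambda v+y-x = \bigl[\lambda\bigl(u+\nabla g(z)\bigr)+y-x\bigr]+\lambda\bigl(\nabla g(y)-\nabla g(z)\bigr),
\]
then apply the triangle inequality together with the $L$-Lipschitz continuity of $\nabla g$ on $\Omega$ to bound $\|\nabla g(y)-\nabla g(z)\|\leq L\|y-z\|$. The geometric fact $\|y-z\|\leq\|y-x\|$ (since $z=P_\Omega(x)$ and $y\in\mathrm{Dom}(\partial_\varepsilon f)\subset\overline{\mathrm{Dom}(\partial f)}\subset\Omega$) then upgrades this to $\|\nabla g(y)-\nabla g(z)\|\leq L\|y-x\|$. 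Dividing the resulting squared bound by $1+\lambda\mu$ and adding $2\lambda\varepsilon$ yields
\[
\frac{\|\lambda v+y-x\|^2}{1+\lambda\mu}+2\lambda\varepsilon
\leq \left(\frac{\|\lambda(u+\nabla g(z))+y-x\|}{\sqrt{1+\lambda\mu}}+\frac{\lambda L\,\|y-x\|}{\sqrt{1+\lambda\mu}}\right)^2+2\lambda\varepsilon.
\]

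The key step is the same elementary inequality $(a+b)^2+c\leq\bigl(b+\sqrt{a^2+c}\bigr)^2$ (valid for $a,b,c\geq 0$) used in the proof of Proposition \ref{pr:tensor_hat}, applied with
\[
a=\frac{\|\lambda(u+\nabla g(z))+y-x\|}{\sqrt{1+\lambda\mu}},\qquad b=\frac{\lambda L\,\|y-x\|}{\sqrt{1+\lambda\mu}},\qquad c=2\lambda\varepsilon.
\]
Invoking the hypothesis \eqref{eq:mbarros4} under the square root gives $\sqrt{a^2+c}\leq\hat\sigma\|y-x\|$, and factoring out $\|y-x\|$ from the resulting bound yields
\[
\frac{\|\lambda v+y-x\|^2}{1+\lambda\mu}+2\lambda\varepsilon\leq\left(\frac{\lambda L}{\sqrt{1+\lambda\mu}}+\hat\sigma\right)^{\!2}\|y-x\|^2=\sigma^2\|y-x\|^2,
\]
using the definition of $\sigma$ in \eqref{eq:vhalen2}.

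There is no real obstacle here; every step is routine once the proof of Proposition \ref{pr:tensor_hat} has been absorbed, with the tensor residual bound \eqref{eq:error_p} (of order $\|y-x\|^p$) simply being replaced by the first-order Lipschitz bound (of order $\|y-x\|$). The only point that requires a small justification is the domain inclusion $y\in\Omega$ needed to legitimately apply Lipschitzness of $\nabla g$ at $y$ and at $z=P_\Omega(x)$, and this follows exactly as in the parenthetical remark in the proof of Proposition \ref{pr:tensor_hat}.
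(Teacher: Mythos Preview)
Your proposal is correct and follows exactly the approach the paper intends: the paper's own proof of Proposition~\ref{pr:vhalen} simply reads ``The proof follows the same outline of Proposition~\ref{pr:tensor_hat}'s proof,'' and what you have written is precisely that outline, with the tensor residual bound \eqref{eq:error_p} replaced by the first-order Lipschitz bound $\|\nabla g(y)-\nabla g(z)\|\leq L\|y-z\|$. The only cosmetic remark is that in Section~\ref{sec:first} the standing assumption is $\Omega\supseteq\mathrm{dom}\,f$, so $u\in\partial_\varepsilon f(y)$ already gives $y\in\mathrm{dom}\,f\subset\Omega$ directly, without passing through $\overline{\mathrm{Dom}(\partial f)}$.
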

\begin{proof}
The proof follows the same outline of Proposition \ref{pr:tensor_hat}'s proof.
\end{proof}

\mgap

 For solving \eqref{eq:co2}, we propose the following inexact (relative-error) accelerated first-order algorithm.

\mgap
\mgap

%
\noindent
\fbox{
\begin{minipage}[h]{6.6 in}
\begin{algorithm}
\label{alg:first01}
{\bf An accelerated inexact proximal-gradient algorithm for solving \eqref{eq:co2}}
\end{algorithm}
\begin{itemize}
\item [0)] 
Choose $x^0,y^0\in \HH$ and $\hat\sigma\geq 0$,  $0<\sigma_u\leq 1$ such
that $\sigma:=\sigma_u+\hat\sigma<1$ and let
\begin{align}
 \label{eq:first_lambda}
\lambda=\dfrac{\sigma_u}{\sqrt{\left(\dfrac{\sigma_u\mu}{2}\right)^2+L^2}-\dfrac{\sigma_u\mu}{2}}>\dfrac{\sigma_u}{L};
 \end{align}
 let $A_0=0$ and set $k=0$.
\item [1)] Compute $z^k=P_{\Omega}(\widetilde x^k)$ and 
\begin{align}
\begin{aligned}
 \label{eq:alg_err4}
%
  \qquad (y^{k+1},u^{k+1},\varepsilon_{k+1})\approx (\lambda \partial f+I)^{-1}\left(\widetilde x^k-\lambda \nabla g(z^k)\right),
\end{aligned}
\end{align}
i.e., compute a $\hat\sigma$-approximate PG solution $(u^{k+1},y^{k+1},\varepsilon_{k+1})$ 
at $(\widetilde x^k,\lambda)$ (in the sense of Definition \ref{def:gradient_sol}),
where
\begin{align}
\label{eq:alg_xtil4}
& \widetilde x^k =  \left(\dfrac{a_{k+1}-\mu A_k\lambda}{A_k+a_{k+1}}\right)x^k
 + \left(\dfrac{A_k+\mu A_k\lambda}{A_k+a_{k+1}}\right)y^k,\\[3mm]
 \label{eq:alg_a4}
& a_{k+1}=\dfrac{(1+2\mu A_k)\lambda+\sqrt{(1+2\mu A_k)^2\lambda^2
+4(1+\mu A_k)A_k\lambda}}{2}. 
\end{align}

\item[2)] 
 Let
  \begin{align}
  \label{eq:alg_A4}
  &A_{k+1} = A_k + a_{k+1},\\[2mm]
  \label{eq:alg_vn4}
  &v^{k+1}=u^{k+1}+\nabla g(y^{k+1}),\\[2mm]
  \label{eq:alg_xne4}
  & x^{k+1} = \left(\dfrac{1+\mu A_k}{1+\mu A_{k+1}}\right)x^k + 
  \left(\dfrac{\mu a_{k+1}}{1+\mu A_{k+1}}\right) y^{k+1}- \left(\dfrac{a_{k+1}}{1+\mu A_{k+1}}\right) v^{k+1}.
  \end{align}
\item[3)] 
Set $k=k+1$ and go to step 1.
\end{itemize}
\noindent
\end{minipage}
} 
%

\mgap
\mgap

\noindent
We now make the following remark concerning Algorithm \ref{alg:first01}:
\begin{itemize}
\item[(i)] From the definition of $\lambda>0$ in \eqref{eq:first_lambda} we obtain
\begin{align}
 \label{eq:watson}
 \dfrac{\lambda^2 L^2}{1+\lambda\mu}=\sigma_u^2.
\end{align}
Indeed, it is easy to check that $\lambda>0$ is the largest root of 
$L^2\lambda^2-(\sigma_u^2\mu)\lambda-\sigma_u^2=0$, which is clearly equivalent to \eqref{eq:watson}.
Now using \eqref{eq:watson}, we find
\begin{align}
 \label{eq:watson2}
 \sigma=\sigma_u+\hat\sigma=\dfrac{\lambda L}{\sqrt{1+\lambda \mu}}+\hat\sigma.
\end{align}
\end{itemize}

\mgap

Next proposition shows that Algorithm \ref{alg:first01} is a special instance of Algorithm \ref{alg:main} for solving
\eqref{eq:co2}.

\mgap

\begin{proposition}
 \label{pr:special01}
 Consider the sequences evolved by \emph{Algorithm \ref{alg:first01}} and let $\lambda_{k+1}\equiv \lambda$.
Then, $\lambda_{k+1}>0$ and the triple $(y^{k+1}, v^{k+1}, \varepsilon_{k+1})$ satisfy condition \eqref{eq:alg_err}
in \emph{Algorithm \ref{alg:main}} with $\sigma=\hat\sigma+\sigma_u$. As a consequence, \emph{Algorithm \ref{alg:first01}} is a special instance
of \emph{Algorithm \ref{alg:main}} for solving \eqref{eq:co2}.
\end{proposition}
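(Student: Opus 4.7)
The plan is to apply Proposition \ref{pr:vhalen} directly to the $\hat\sigma$-approximate PG solution produced in step 1 of Algorithm \ref{alg:first01}, and then to unwind the definition of $\lambda$ in \eqref{eq:first_lambda} to pin down the constant in the resulting relative-error bound. Concretely, in Proposition \ref{pr:vhalen} I would take $(x,\lambda)\leftarrow(\widetilde x^k,\lambda)$ and $(u,y,\varepsilon)\leftarrow(u^{k+1},y^{k+1},\varepsilon_{k+1})$. By \eqref{eq:alg_err4}, this triple is by assumption a $\hat\sigma$-approximate PG solution at $(\widetilde x^k,\lambda)$, so Proposition \ref{pr:vhalen} yields both the inclusion $v^{k+1}\in\partial_{\varepsilon_{k+1}}f(y^{k+1})+\nabla g(y^{k+1})$ (which is the first line of \eqref{eq:alg_err}, since $\nabla g(y^{k+1})\in\partial g(y^{k+1})$) and the inequality
\[
\frac{\|\lambda v^{k+1}+y^{k+1}-\widetilde x^k\|^2}{1+\lambda\mu}+2\lambda\varepsilon_{k+1}\leq\sigma^2\|y^{k+1}-\widetilde x^k\|^2,
\]
with $\sigma=\frac{\lambda L}{\sqrt{1+\lambda\mu}}+\hat\sigma$, where $v^{k+1}$ is defined in \eqref{eq:alg_vn4}.

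Next, I would use the observation recorded in \eqref{eq:watson}--\eqref{eq:watson2}: the specific choice of $\lambda$ in \eqref{eq:first_lambda} is designed precisely so that $\lambda^2L^2/(1+\lambda\mu)=\sigma_u^2$, i.e., $\lambda L/\sqrt{1+\lambda\mu}=\sigma_u$. Plugging this into the expression for $\sigma$ above gives $\sigma=\sigma_u+\hat\sigma$, which by hypothesis is strictly less than one. Hence condition \eqref{eq:alg_err} of Algorithm \ref{alg:main} holds with this value of $\sigma$ and with $\lambda_{k+1}\equiv\lambda>0$.

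Finally, to conclude that Algorithm \ref{alg:first01} is genuinely an instance of Algorithm \ref{alg:main}, I would compare the remaining recursions. With $\lambda_{k+1}\equiv\lambda$, the formulas \eqref{eq:alg_xtil4}, \eqref{eq:alg_a4}, \eqref{eq:alg_A4} and \eqref{eq:alg_xne4} for $\widetilde x^k$, $a_{k+1}$, $A_{k+1}$ and $x^{k+1}$ in Algorithm \ref{alg:first01} coincide verbatim with \eqref{eq:alg_xtil}, \eqref{eq:alg_a}, \eqref{eq:alg_A} and \eqref{eq:alg_xne} in Algorithm \ref{alg:main}, so no additional verification is needed for these updates. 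There is no real obstacle here; the only nontrivial point is recognizing that the somewhat opaque formula \eqref{eq:first_lambda} for $\lambda$ is engineered exactly to absorb the Lipschitz error $\lambda L/\sqrt{1+\lambda\mu}$ from Proposition \ref{pr:vhalen} into the prescribed constant $\sigma_u$, after which everything else is bookkeeping.
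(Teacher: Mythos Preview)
Your proposal is correct and follows essentially the same approach as the paper's own proof, which simply cites \eqref{eq:alg_err4}, \eqref{eq:watson2}, Proposition~\ref{pr:vhalen}, and the definitions of Algorithms~\ref{alg:main} and~\ref{alg:first01}. Your write-up is just a more explicit unpacking of these ingredients.
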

\begin{proof}
The proof follows from \eqref{eq:alg_err4}, \eqref{eq:watson2}, Proposition \ref{pr:vhalen} and the definitions of Algorithms 
\ref{alg:main} and \ref{alg:first01}.
\end{proof}

\mgap

Next we summarize the results on \emph{linear convergence} rates for Algorithm \ref{alg:first01}.

\begin{theorem}[{\bf Convergence rates for Algorithm \ref{alg:first01}}]
 \label{th:first01}
 Consider the sequences evolved by \emph{Algorithm \ref{alg:first01}} and let $\sigma=\hat\sigma+\sigma_u$.
  Let also $x^*$ be the unique solution of \eqref{eq:co2}, let $d_0$ be as in  \eqref{eq:def.d0} and denote
  $\gamma=\sqrt{(1+\sigma_u)^{-1}\sigma_u}$.
 The following holds:
 \begin{itemize}
 \item[\emph{(a)}] For all $k\geq 1$,
 \begin{align*}
  & h(y^{k})-h(x^*)\leq \dfrac{L d_0^2}{2\sigma_u}\left(1-\gamma\sqrt{\dfrac{\mu}{L}}\,\right)^{k-1},\\[3mm]
  & \max\left\{\norm{x^*-y^k},\norm{x^*-x^k}\right\}\leq 
      \sqrt{\dfrac{L}{\sigma_u\mu}}\,d_0\left(1-\gamma\sqrt{\dfrac{\mu}{L}}\,\right)^{(k-1)/2}.
 \end{align*}
\item[\emph{(b)}] For all $k\geq 1$, 
\begin{align*}
%
\begin{cases}
&v^{k+1}\in \partial_{\varepsilon_{k+1}} f(y^{k+1})+\partial g(y^{k+1}),\\[2mm]
&\norm{v^{k+1}}\leq 
\dfrac{6d_0 L^{3/2}}{\mu^{1/2}\sigma_u^{3/2}} 
\left(1+\sigma\sqrt{1+\frac{\sigma_u\,\mu}{L}}\,\right)
\left(1-\gamma\sqrt{\dfrac{\mu}{L}}\,\right)^{(k-1)/2},\\[6mm]
&\varepsilon_{k+1}\leq \dfrac{3\sigma^2 d_0^2 L^2}
{\sigma_u^2\,\mu}\left(1-\gamma\sqrt{\dfrac{\mu}{L}}\,\right)^{k-1}.
\end{cases}
\end{align*}

 \end{itemize}
\end{theorem}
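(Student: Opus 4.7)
The plan is to leverage the structural result already established: by Proposition \ref{pr:special01}, Algorithm \ref{alg:first01} is a special instance of Algorithm \ref{alg:main} with the constant stepsize $\lambda_k \equiv \lambda$ from \eqref{eq:first_lambda} and with relative-error parameter $\sigma = \hat\sigma + \sigma_u < 1$. Consequently, Theorem \ref{th:main_comp03} applies with $\underline{\lambda} := \lambda$, producing linear convergence rates in terms of $\alpha = \sqrt{\mu\lambda/(1+\mu\lambda)}$ and $\lambda$. The bulk of the work is then translating these bounds into the explicit constants $L/\sigma_u$ and $\gamma\sqrt{\mu/L}$ asked for in the statement.

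Two elementary estimates suffice. The first, $1/\lambda < L/\sigma_u$, is precisely the lower bound on $\lambda$ already recorded in \eqref{eq:first_lambda}. The second, $\alpha \geq \gamma\sqrt{\mu/L}$, I would obtain by exploiting monotonicity of $t \mapsto t/(1+t)$, the inequality $\mu\lambda > \sigma_u\mu/L$ (a restatement of \eqref{eq:first_lambda}), and the standard fact $\mu \leq L$ (forced by $g$ being simultaneously $\mu$-strongly convex and having $L$-Lipschitz gradient), to obtain
\[
\alpha^2 \;=\; \frac{\mu\lambda}{1+\mu\lambda} \;>\; \frac{\sigma_u\mu/L}{1+\sigma_u\mu/L} \;=\; \frac{\sigma_u\mu}{L+\sigma_u\mu} \;\geq\; \frac{\sigma_u\mu}{(1+\sigma_u)L} \;=\; \frac{\gamma^2\mu}{L},
\]
so that $1-\alpha \leq 1 - \gamma\sqrt{\mu/L}$.

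With these two inequalities in hand, part (a) follows by plugging them into Theorem \ref{th:main_comp03}(a): the coefficient $1/(2\lambda)$ is majorized by $L/(2\sigma_u)$, while the distance bound uses $1/\sqrt{\mu\lambda} \leq \sqrt{L/(\sigma_u\mu)}$. Part (b) is obtained analogously from Theorem \ref{th:main_comp03}(b), with $1/\lambda^{3/2}$ controlled by $L^{3/2}/\sigma_u^{3/2}$ and $1/\lambda^2$ by $L^2/\sigma_u^2$.

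The only step I expect to require genuine care is the coefficient $1 + \sigma\sqrt{1+\mu\lambda}$ in part (b), where the constant $\sqrt{1+\sigma_u\mu/L}$ has to be extracted. The plan here is to invoke identity \eqref{eq:watson} to write $\sqrt{1+\mu\lambda} = \lambda L/\sigma_u$ and then bound $\lambda$ above using the explicit formula \eqref{eq:first_lambda} together with the elementary inequality $\sqrt{(\sigma_u\mu/2)^2 + L^2} \leq L + \sigma_u\mu/2$. This algebraic bookkeeping is the only real obstacle; the conceptual content is fully carried by Proposition \ref{pr:special01} and Theorem \ref{th:main_comp03}.
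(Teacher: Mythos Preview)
Your approach is correct and coincides with the paper's: reduce to Algorithm~\ref{alg:main} via Proposition~\ref{pr:special01}, apply Theorem~\ref{th:main_comp03} with $\underline\lambda=\lambda$, and substitute using $\lambda>\sigma_u/L$ and $\alpha>\gamma\sqrt{\mu/L}$ (the paper records precisely these two facts in~\eqref{eq:viola} and leaves the algebra to the reader). One small correction to your last paragraph: applied directly to~\eqref{eq:first_lambda}, the inequality $\sqrt{(\sigma_u\mu/2)^2+L^2}\le L+\sigma_u\mu/2$ bounds the denominator from above and hence $\lambda$ from \emph{below}, not above; to get the upper bound you need, first rationalize to $\lambda=\tfrac{\sigma_u}{L^2}\bigl(\sqrt{(\sigma_u\mu/2)^2+L^2}+\sigma_u\mu/2\bigr)$, after which the same inequality yields $\lambda\le(\sigma_u/L)(1+\sigma_u\mu/L)$ and hence, via~\eqref{eq:watson}, $\sqrt{1+\mu\lambda}\le 1+\sigma_u\mu/L$ --- the slack between $\sqrt6$ and $6$ in the target constant then absorbs the gap between $1+\sigma_u\mu/L$ and $\sqrt{1+\sigma_u\mu/L}$.
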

\begin{proof}
(a) First note that simple computations using \eqref{eq:def.alpha} with $\underline{\lambda}=\lambda$, the inequality in \eqref{eq:first_lambda}, the definition of $\gamma>0$ and the
fact that $L \geq \mu$ show that 
\begin{align}
 \label{eq:viola}
 \alpha>\sqrt{(1+\sigma_u)^{-1}\sigma_u}\sqrt{\dfrac{\mu}{L}}=\gamma\sqrt{\dfrac{\mu}{L}},\qquad \lambda>\dfrac{\sigma_u}{L},
\end{align}
which combined with Proposition \ref{pr:special01} and Theorem \ref{th:main_comp03}(a) gives  the proof of (a). 

\mgap

(b) The result follows from \eqref{eq:viola}, Proposition \ref{pr:special01} and Theorem \ref{th:main_comp03}(b).
\end{proof}

\mgap
\mgap

%
\section*{Acknowledgments}
The author would like to thank Dr. Benar F. Svaiter for the fruitful discussions related to the first draft of this work.
The author also thank the three anonymous referees and the associate editor for their comments that significantly improved the manuscript.
%

\appendix

\section{Some auxiliary results}

\begin{lemma}
 \label{lm:carinhoso}
For all $k\geq 1$, the optimal value of the minimization problem, over $\lambda_1,\dots, \lambda_k>0$,
\begin{align}
 \label{eq:min_prod}
\begin{aligned}
 &\min \,\prod_{j=1}^{k}\left(1+\lambda_{j}\right)\\[2mm]
 &\emph{s.t.}\;\;\sum_{j=1}^k\dfrac{1}{\lambda_j^q}\leq c,
\end{aligned}
\end{align}
where $c>0$ and $q\geq 1$, is given by
\[
 \left(1+\left(\dfrac{k}{c}\right)^{1/q}\right)^k.
\]
\end{lemma}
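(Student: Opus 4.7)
The plan is to establish the desired value as a lower bound via two applications of the AM-GM inequality, and then exhibit equality at the symmetric point $\lambda_1=\cdots=\lambda_k=(k/c)^{1/q}$, which lies in the feasible set with the constraint active.

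The first step is a multiplicative version of AM-GM sometimes attributed to Mahler: for positive reals $a_j,b_j$,
\[
\prod_{j=1}^k(a_j+b_j)^{1/k}\;\geq\;\Bigl(\prod_{j=1}^k a_j\Bigr)^{1/k}+\Bigl(\prod_{j=1}^k b_j\Bigr)^{1/k}.
\]
(Proof sketch: divide $1=\frac{a_j}{a_j+b_j}+\frac{b_j}{a_j+b_j}$, sum over $j$, and apply AM-GM to each piece.) Taking $a_j=1$ and $b_j=\lambda_j$ yields
\[
\prod_{j=1}^k(1+\lambda_j)\;\geq\;\Bigl(1+\bigl(\textstyle\prod_{j=1}^k\lambda_j\bigr)^{1/k}\Bigr)^{k}.
\]

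The second step is ordinary AM-GM applied to the constraint. For any feasible $(\lambda_1,\dots,\lambda_k)$,
\[
\frac{c}{k}\;\geq\;\frac{1}{k}\sum_{j=1}^k\frac{1}{\lambda_j^q}\;\geq\;\Bigl(\prod_{j=1}^k\frac{1}{\lambda_j^q}\Bigr)^{1/k}\;=\;\frac{1}{\bigl(\prod_{j=1}^k\lambda_j\bigr)^{q/k}},
\]
so $\bigl(\prod_{j=1}^k\lambda_j\bigr)^{1/k}\geq(k/c)^{1/q}$. Inserting this into the previous display gives
\[
\prod_{j=1}^k(1+\lambda_j)\;\geq\;\Bigl(1+(k/c)^{1/q}\Bigr)^{k}
\]
for every feasible point, establishing the lower bound. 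Finally, setting $\lambda_j=(k/c)^{1/q}$ for all $j$ makes the constraint an equality and renders both AM-GM inequalities above equalities, so the product equals $\bigl(1+(k/c)^{1/q}\bigr)^k$, which is therefore the optimal value.

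No serious obstacle is expected; the only point requiring care is the Mahler-type inequality, which is not completely standard but follows from a one-line AM-GM argument as indicated. An alternative route would be Lagrange multipliers showing the unique interior critical point is symmetric, but this would require an additional verification that it is a minimum (via boundary behavior: the product blows up as any $\lambda_j\to\infty$, and the constraint blows up as any $\lambda_j\to 0^+$), whereas the AM-GM approach delivers both the bound and the equality case simultaneously.
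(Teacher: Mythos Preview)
Your proof is correct and takes a genuinely different route from the paper's. The paper makes the substitution $\lambda_j=e^{t_j}$, observes that both $\sum_j\log(1+e^{t_j})$ and the constraint $\sum_j e^{-qt_j}\le c$ are convex and permutation-symmetric in $(t_1,\dots,t_k)$, and concludes that a symmetric minimizer exists (with the constraint active), which it then reads off directly. Your argument avoids the change of variables entirely: the Mahler-type inequality reduces the product to a function of the geometric mean $(\prod_j\lambda_j)^{1/k}$, and a second AM-GM bounds that geometric mean from below using the constraint. Your approach is more self-contained and delivers both the lower bound and the equality case in one stroke, whereas the paper's approach leans on the (standard but not always proved) fact that a convex symmetric problem has a symmetric minimizer, together with a separate check that the constraint is tight at optimum. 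Either argument is short; yours is arguably the more elementary of the two, at the modest cost of quoting (and sketching) the Mahler inequality.
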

\begin{proof}
First consider the convex problem
\begin{align}
 \label{eq:min_prod_log}
\begin{aligned}
 &\min \,\sum_{j=1}^{k}\,\log (1+e^{t_j})\\[2mm]
 &\mbox{s.t.}\;\;\sum_{j=1}^k\dfrac{1}{e^{qt_j}}\leq c.
\end{aligned}
\end{align}
Since the objective and constraint functions in \eqref{eq:min_prod_log} are convex and invariant under permutations on 
$(t_1,\dots, t_k)$, it follows that one of its solutions takes the form $(t,\dots, t)$. It is also clear that at any solution 
the inequality in \eqref{eq:min_prod_log} must hold as an equality. Hence, $k \frac{1}{e^{qt}}=c$, i.e.,
$e^t=\left(\frac{k}{c}\right)^{1/q}$. As a consequence, for all $(t_1,\dots, t_k)$ such that
$\sum_{j=1}^k\frac{1}{e^{qt_j}}\leq c$,
\begin{align}
 \label{eq:bourgain}
 \sum_{j=1}^{k}\,\log (1+e^{t_j})\geq k \log (1+e^t)=k \log \left(1+\left(\dfrac{k}{c}\right)^{1/q}\right)
 =\log\left(\left(1+\left(\dfrac{k}{c}\right)^{1/q}\right)^k\right).
\end{align}

Now let $\lambda_1,\dots, \lambda_k>0$ be such that $\sum_{j=1}^k\frac{1}{\lambda_j^q}\leq c$ and define
$t_j:=\log(\lambda_j)$, for $j\in \{1,\dots, k\}$. Then, since in this case $\sum_{j=1}^k\frac{1}{e^{qt_j}}\leq c$, using
\eqref{eq:bourgain} and some basic properties of logarithms we find
\begin{align*}
\prod_{j=1}^k\,(1+\lambda_j)=\prod_{j=1}^k\,(1+e^{t_j})&=e^{\log\left(\prod_{j=1}^k\,(1+e^{t_j})\right)}\\
   &=e^{\sum_{j=1}^{k}\,\log (1+e^{t_j})}\\
   &\geq e^{\log\left(\left(1+\left(\dfrac{k}{c}\right)^{1/q}\right)^k\right)}\\
   &=\left(1+\left(\dfrac{k}{c}\right)^{1/q}\right)^k,
\end{align*}
which concludes the proof of the lemma.
\end{proof}

\mgap
\mgap

\begin{lemma}
 \label{lm:wolfe}
The following holds for $q(\cdot)$ defined by 
\begin{align}
\label{eq:def.q}
q(x)=\inner{v}{x-y}+\frac{\mu}{2}\norm{x-y}^2-\varepsilon+\frac{1}{2\lambda}\norm{x-z}^2\qquad (x\in \HH)
\end{align}
where $v,y,z\in \HH$ and $\mu,\varepsilon,\lambda>0$.
\begin{itemize}
\item[\emph{(a)}] The (unique) global minimizer of $q(\cdot)$ is given by  
\[
 x^*=\frac{1}{1+\lambda\mu}z+\frac{\lambda\mu}{1+\lambda\mu}y
-\frac{\lambda}{1+\lambda\mu}v.
\]
\item[\emph{(b)}] We have,
\begin{align*}
%
\min_{x}\,q(x)=
\dfrac{1}{2\lambda}\left[\norm{y-z}^2
  -\left(\dfrac{\norm{\lambda v+y-z}^2}{1+\lambda\mu}+2\lambda\varepsilon\right)\right].
\end{align*}
\item[\emph{(c)}] We have,
\begin{align*}
q(x)=\dfrac{1}{2\lambda}\left[\norm{y-z}^2
  -\left(\dfrac{\norm{\lambda v+y-z}^2}{1+\lambda\mu}+2\lambda\varepsilon\right)\right]
+\dfrac{1+\lambda\mu}{2\lambda}\norm{x-x^*}^2,\qquad \forall x\in \HH.
\end{align*}
\end{itemize}
\end{lemma}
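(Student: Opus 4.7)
The plan is to exploit that $q(\cdot)$ is a strongly convex quadratic with constant Hessian $\nabla^2 q(x) = \bigl(\mu + \tfrac{1}{\lambda}\bigr)I = \tfrac{1+\lambda\mu}{\lambda} I$. From this single observation all three parts fall out in sequence.

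For part (a), I set $\nabla q(x^*) = 0$:
\begin{equation*}
v + \mu(x^* - y) + \tfrac{1}{\lambda}(x^* - z) = 0,
\end{equation*}
which rearranges to $\tfrac{1+\lambda\mu}{\lambda} x^* = \tfrac{z}{\lambda} + \mu y - v$, and hence
\begin{equation*}
x^* = \tfrac{1}{1+\lambda\mu}z + \tfrac{\lambda\mu}{1+\lambda\mu}y - \tfrac{\lambda}{1+\lambda\mu}v,
\end{equation*}
matching the claimed expression. Since the Hessian is positive definite, $x^*$ is the unique global minimizer.

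For part (c), since $q$ is quadratic with constant Hessian $\tfrac{1+\lambda\mu}{\lambda} I$ and $\nabla q(x^*) = 0$, Taylor's theorem (equivalently, a two-line direct expansion) yields
\begin{equation*}
q(x) \;=\; q(x^*) + \tfrac{1+\lambda\mu}{2\lambda}\,\|x - x^*\|^2, \qquad \forall x\in\HH.
\end{equation*}
Thus (c) reduces to identifying $q(x^*)$ with the bracketed expression in the statement, which is exactly the content of (b).

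For part (b), I would merge the two quadratic-in-$x$ terms via the identity
\begin{equation*}
\tfrac{\mu}{2}\|x-y\|^2 + \tfrac{1}{2\lambda}\|x-z\|^2 \;=\; \tfrac{1+\lambda\mu}{2\lambda}\,\|x - \bar w\|^2 + \tfrac{\mu}{2(1+\lambda\mu)}\|y-z\|^2,
\end{equation*}
where $\bar w := \tfrac{z + \lambda\mu y}{1+\lambda\mu}$, then absorb the affine term $\inner{v}{x-y}$ by completing the square in $x - \bar w$. This produces both the quadratic $\tfrac{1+\lambda\mu}{2\lambda}\|x - x^*\|^2$ (confirming the minimizer of part (a)) and the constant
\begin{equation*}
q(x^*) \;=\; -\tfrac{\lambda\|v\|^2}{2(1+\lambda\mu)} - \tfrac{\inner{v}{y-z}}{1+\lambda\mu} + \tfrac{\mu\|y-z\|^2}{2(1+\lambda\mu)} - \varepsilon.
\end{equation*}
The final step is to verify algebraically that this equals $\tfrac{1}{2\lambda}\bigl[\|y-z\|^2 - (\|\lambda v + y-z\|^2/(1+\lambda\mu) + 2\lambda\varepsilon)\bigr]$ by expanding $\|\lambda v + y - z\|^2 = \lambda^2\|v\|^2 + 2\lambda\inner{v}{y-z} + \|y-z\|^2$ and collecting the $\|y-z\|^2$ coefficient $\tfrac{1}{2\lambda} - \tfrac{1}{2\lambda(1+\lambda\mu)} = \tfrac{\mu}{2(1+\lambda\mu)}$.

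The only ``obstacle'' is bookkeeping in the completion-of-squares step of (b); there is no conceptual difficulty, since everything is forced by the fact that $q$ is a strongly convex quadratic.
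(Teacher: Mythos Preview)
Your proposal is correct and follows essentially the same approach as the paper: parts (a) and (c) match the paper's argument verbatim (first-order condition, then Taylor expansion of the quadratic), and part (b) is the same completion-of-squares computation, just organized slightly differently. The paper evaluates $q$ directly at $x^*$ using the identity $a\|u\|^2+b\|w\|^2=\tfrac{1}{a+b}\bigl[\|au+bw\|^2+ab\|u-w\|^2\bigr]$ together with the first-order relation $\mu(x^*-y)+\tfrac{1}{\lambda}(x^*-z)=-v$, whereas you keep $x$ free, merge the two quadratic terms into $\tfrac{1+\lambda\mu}{2\lambda}\|x-\bar w\|^2$, and then complete the square with the affine term---this re-derives $x^*$ and gives $q(x^*)$ in one pass, but the algebra and the final verification are identical.
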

\begin{proof}
(a) This follows directly from \eqref{eq:def.q} and some simple calculus.

(b) 
Note first that 
\begin{align}
 \label{eq:opt.value}
 \min_x\,q(x)=q(x^*)
    &=\inner{v}{x^*-y}+\frac{\mu}{2}\norm{x^*-y}^2-\varepsilon+\frac{1}{2\lambda}\norm{x^*-z}^2.
 \end{align}
 Using the well-known  identity $a\norm{z}^2+b\norm{w}^2=\frac{1}{a+b}
 \left[\norm{az+bw}^2+ab\norm{z-w}^2\right]$ with $a=\mu$, $b=1/ \lambda$,
 $z=x^*-y$ and $w=x^*-z$, and (a) we find
 \begin{align}
  \label{eq:square.xstar}
  \nonumber
  \mu \norm{x^*-y}^2+\frac{1}{\lambda}\norm{x^*-z}^2&=
  \dfrac{\lambda}{1+\lambda\mu}\left[\left\|\underbrace{\dfrac{1+\lambda\mu}{\lambda}x^*-\mu y-\dfrac{1}
  {\lambda}z}_{-v}\right\|^2+\dfrac{\mu}{\lambda}\norm{z-y}^2\right]\\
  &=\dfrac{\lambda}{1+\lambda\mu}\left[\norm{v}^2+\dfrac{\mu}{\lambda}\norm{z-y}^2\right].
 \end{align}
 
 On the other hand, we also have $x^*-y=\frac{1}{1+\lambda\mu}(z-y)-\frac{\lambda}{1+\lambda\mu}v$,
 which in turn gives
 \begin{align}
 \label{eq:inner.v}
 \inner{v}{x^*-y}=\dfrac{1}{1+\lambda\mu}\left[\inner{v}{z-y}-\lambda\norm{v}^2\right].
 \end{align}
 Direct use of \eqref{eq:opt.value}, \eqref{eq:square.xstar} and \eqref{eq:inner.v} yields
 
 \begin{align*}
  \min_x\,q(x)+\varepsilon&=
  \dfrac{1}{1+\lambda\mu}\left[\inner{v}{z-y}-\lambda\norm{v}^2\right]
  +\dfrac{\lambda}{2(1+\lambda\mu)}\left[\norm{v}^2+\dfrac{\mu}{\lambda}\norm{z-y}^2\right]\\
  &=\dfrac{1}{2\lambda(1+\lambda\mu)}
     \left[2\inner{\lambda v}{z-y}-\norm{\lambda v}^2+\lambda\mu
     \norm{z-y}^2\right]\\
  &= \dfrac{1}{2\lambda(1+\lambda\mu)}\left[(1+\lambda\mu)\norm{y-z}^2
  -\norm{\lambda v+y-z}^2\right] \\
  &= \dfrac{1}{2\lambda}\left[\norm{y-z}^2
  -\dfrac{\norm{\lambda v+y-z}^2}{1+\lambda\mu}\right],
 \end{align*}
 which then yields
 \begin{align*}
 \min_x\,q(x)&=\dfrac{1}{2\lambda}\left[\norm{y-z}^2
  -\dfrac{\norm{\lambda v+y-z}^2}{1+\lambda\mu}\right]-\varepsilon\\
       &=\dfrac{1}{2\lambda}\left[\norm{y-z}^2
  -\left(\dfrac{\norm{\lambda v+y-z}^2}{1+\lambda\mu}+2\lambda\varepsilon\right)\right].
 \end{align*}
 
 (c) This follows from (b) and Taylor's theorem applied to $q(\cdot)$.
\end{proof}

\mgap
\mgap


%

\def\cprime{$'$} \def\cprime{$'$}

\end{document}